\documentclass[10pt]{amsart}

\usepackage[latin1]{inputenc}
\usepackage{amsfonts}
\usepackage{amsmath}
\usepackage{amssymb}
\usepackage{amsthm}
\usepackage[american]{babel}
\usepackage[T1]{fontenc}
\usepackage{xspace}
\usepackage{bbm}
\usepackage[all]{xy}

\theoremstyle{plain} %% This is the default, anyway
\begingroup % Confine the \theorembodyfont command
\newtheorem{theorem}{Theorem}[section]
\newtheorem{corollary}[theorem]{Corollary}
\newtheorem{conjecture}[theorem]{Conjecture}
\newtheorem{lemma}[theorem]{Lemma}
\newtheorem{proposition}[theorem]{Proposition}

\endgroup

\theoremstyle{definition}
\newtheorem{definition}[theorem]{Definition}

\theoremstyle{remark}
\newtheorem{remark}[theorem]{Remark}

\newcommand{\Ks}{\ensuremath{K^\times}\xspace}
\newcommand{\MZ}{\ensuremath{\mathbbm{Z}}\xspace}
\newcommand{\MQ}{\ensuremath{\mathbbm{Q}}\xspace}

\newcommand{\MP}{\ensuremath{\mathbbm{P}}\xspace}
\newcommand{\MG}{\ensuremath{\mathbbm{G}}\xspace}
\newcommand{\MF}{\ensuremath{\mathbbm{F}}\xspace}
\newcommand{\MGh}{\ensuremath{\hat{\mathbbm{G}}}\xspace}

\title{On the Grassmannian homology of $\MF_2$ and $\MF_3$}
\author{Oliver~Petras}
\address{Hausdorff Center for Mathematics\\ Universit\"at Bonn\\ Endenicher Allee 62 \\ D--53115 Bonn \\ oliver.petras@hcm.uni-bonn.de}

\author{Dorothee~Richters}
\address{Fachbereich Physik, Mathematik und Informatik\\ Johannes Gutenberg - Universit\"at Mainz\\ Staudinger Weg 9\\ D--55099 Mainz \\ dororich@students.uni-mainz.de}

\subjclass{14F42; 14G15; 14C05; 14C25; 18F25}
\keywords{Grassmannian homology; cubical higher Chow groups; motivic cohomology; finite fields}

\begin{document}

\begin{abstract}
We prove the vanishing of the subgroup of Bloch's cubical higher Chow groups $CH^2(\text{Spec}(\MF_p),3)$, $p=2,3$, generated by the images of corresponding projective Grassmannian homology groups ${}^PGH_1^2(\MF_p)$ using computer calculations.
\end{abstract}

%\received{Month Day, Year}   
%\revised{Month Day, Year}    
%\published{Month Day, Year}  
%\submitted{Daniel Grayson}  
%\volumeyear{2010} 
%\volumenumber{10} 
%\issuenumber{2}   
%\startpage{1}     
%\webaddress{http://intlpress.com/HHA/v10/n2/a?}
%\owner{International Press}

\maketitle

\section{Introduction}
As already explored in \cite{HGaSM99, OP09}, it is still an open problem to construct explicit motivic cohomology classes even for the spectrum $\text{Spec}(K)$ of a number field $K$. This note is an attempt to contribute to a slightly easier problem by concentrating on finite fields on the one hand and on motivic cohomology classes described via Bloch's higher Chow groups which come from the ``linearized'' version of motivic cohomology, namely the projective Grassmannian homology of \cite{BPS}, on the other hand.

By brute force computation on a multiprocessor machine, we list all admissible, non-degenerate higher Chow cycles in an acyclic subquotient of Bloch's cubical higher Chow complex $Z^2(\text{Spec}(\MF_p),3)$ for $p=2,3$ which are linearly embedded, i.e. come from projective Grassmannian cycles via the inclusion map of chain complexes of \cite{BPS}. This enables us to prove the main result:
\begin{theorem}
 The subgroup of $CH^2(\text{Spec}(\MF_p),3)$ for $p=2,3$ generated by linearly embedded cycles vanishes. In other words, the projective Grassmannian homology groups ${}^PGH_1^2(\MF_p)$ for $p=2,3$ are trivial.
\end{theorem}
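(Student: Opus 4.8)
The plan is to reduce the statement to a finite linear-algebra problem over $\MZ$ and then to settle it by machine. By \cite{BPS}, ${}^PGH_1^2(K)$ is a homology group of the projective Grassmannian complex, which embeds as a subcomplex of Bloch's cubical cycle complex $Z^2(\text{Spec}(K),\bullet)$ under the convention matching Grassmannian homological degree $1$ with cubical degree $3$; its degree-$m$ part is precisely the group of linearly embedded cycles. So it suffices to compute the homology at the middle spot of
\[
 Z^2(\text{Spec}(\MF_p),4)_{\mathrm{lin}}\ \xrightarrow{\ \partial\ }\ Z^2(\text{Spec}(\MF_p),3)_{\mathrm{lin}}\ \xrightarrow{\ \partial\ }\ Z^2(\text{Spec}(\MF_p),2)_{\mathrm{lin}}
\]
and to show it vanishes; this gives ${}^PGH_1^2(\MF_p)=0$ directly, and then the vanishing of the subgroup it generates in $CH^2(\text{Spec}(\MF_p),3)$ is immediate. (This is not a statement about a trivial ambient group: by Quillen's theorem $K_3(\MF_p)=\MZ/(p^2-1)$, so $CH^2(\text{Spec}(\MF_p),3)$ is $\MZ/3$ for $p=2$ and $\MZ/8$ for $p=3$, and the content is that linearly embedded cycles span none of it.)

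The first ingredient is a finiteness observation: over $\MF_p$ there are only finitely many admissible, non-degenerate linearly embedded codimension-$2$ cycles on $\square^n_{\MF_p}$ for $n=2,3,4$, because such a cycle is pinned down by finite linear-algebra data over $\MF_p$ --- equivalently, by a configuration of points, or of hyperplanes, in general position in a projective space over $\MF_p$, of which there are only finitely many. I would enumerate these generators and compute all cubical faces $\partial_i^\varepsilon$, and hence the alternating-sum differential $\partial$, on each of them combinatorially, taking care to track multiplicities and the fact that the restriction of a linear cycle to a face may break up into several components. This produces explicit integer matrices for the two differentials displayed above.

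Before computing ranks I would shrink the problem in three ways: pass to the normalized complex, discarding degenerate cycles; use the natural symmetry group of $\square^n$ --- generated by coordinate permutations and the involutions $x_i\mapsto x_i^{-1}$, of order $2^n n!$ --- to keep only orbit representatives; and replace Bloch's complex by the acyclic subquotient described in the introduction, which discards an acyclic part and so carries the same homology while being far smaller. The crucial point is that the certification is then self-contained inside this trimmed linearly embedded subcomplex: to prove ${}^PGH_1^2(\MF_p)=0$ one needs bounding $4$-chains for all linearly embedded $3$-cycles, and these are to be found among linearly embedded $4$-chains --- one need not range over the vastly larger space of arbitrary codimension-$2$ cycles in $\square^4$. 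After the reductions the two boundary matrices are small enough that one computes $\dim\ker$ and $\operatorname{rank}$ exactly over $\MZ$ --- in practice over $\MQ$, with a Smith-normal-form check to exclude torsion classes --- and verifies that at the middle spot the cycles coincide with the boundaries. The enumeration and the exact linear algebra are carried out in parallel on a multiprocessor machine.

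I expect the main obstacle to be twofold. First, completeness and correctness of the enumeration: one must be certain that every admissible non-degenerate linearly embedded cycle has been listed, with every component that can occur after restriction to a face and with the correct integer multiplicities --- the classical source of error in cycle-complex computations. Second, sheer size: without the symmetry reduction and the passage to the acyclic subquotient the boundary matrices are too large for an exact rank-and-torsion computation, so those reductions are essential rather than cosmetic; and one must also check that restricting the relevant quasi-isomorphism to the linearly embedded parts still computes ${}^PGH_1^2(\MF_p)$ and not a variant of it.
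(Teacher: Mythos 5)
Your proposal follows essentially the same route as the paper: enumerate all admissible, non-degenerate fractional linear cycles in degrees $3$ and $4$ over $\MF_p$, pass to the acyclic subquotient $C^2(\MF_p,\bullet)$ with normalized coordinates, and verify by an exact Smith-normal-form computation that every linearly embedded $3$-cycle bounds a linearly embedded $4$-chain, which kills the image in $CH^2(\text{Spec}(\MF_p),3)$ and hence, by the injectivity of ${}^PGH_1^2(K)\hookrightarrow CH^2(\text{Spec}(K),3)$ from \cite{BPS}, the group itself. The only slight imprecision is that the projective Grassmannian complex maps to Bloch's cubical complex through its affine quotient (the complex of linearly embedded cycles) rather than embedding as a subcomplex, but this does not affect the argument.
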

We also obtained partial results on the number of admissible fractional linear cycles in $Z^2(\text{Spec}(\MF_p),3)$ for $p=5,7,11$, but the computation of the whole subgroup in Bloch's higher Chow group in codimension two comprised by fractional linear algebraic cylces was out of reach with our computational ressources.

The complete C++ and SAGE program code is available upon request from any of the authors.

\ack
 We thank Stefan M\"uller-Stach for suggesting this research topic and for supervising the diploma thesis of the second author, on which this article is based. The first author also would like to thank Jens Hornbostel for several discussions.

 Furthermore, we thank the SFB/TR $45$ for financial support and the Hausdorff Center for Mathematics (Universit\"at Bonn) for providing computational facilities without which the results would not be ready yet.

\section{Bloch's higher Chow groups}
Let us begin by recalling the definition of Bloch's higher Chow groups. Since there are many good expositions in literature, we only consider the cubical version keeping in mind that Levine \cite{levine:1994} established a quasi-isomor\-phism to the ``original'' simplicial version due to Bloch \cite{bloch:1986}.

Let $K$ be a field and $$\Box^n_K=(\mathbbm{P}^1_{K}\setminus \{1\})^n$$ with coordinates $(z_1,\ldots,z_n)$ the algebraic standard cube with
 $2^n$ faces of codimension $1$: $$\partial\Box^n_K:=\bigcup_{i=1}^n\{(z_1,\ldots,z_n)\in\Box^n_K|
z_i\in\{0,\infty\}\}$$ and faces of codimension $k$: 
\[
\partial^k\Box^n_K:=\bigcup_{i_1<\ldots <i_k}\{(z_1,\ldots,z_n)\in\Box^n_K|
z_{i_1},\ldots,z_{i_k}\in\{0,\infty\}\}.
\]
In case the field $K$ is not important, we shall drop the subscript in the rest of the article. We now let $X$ be a smooth quasi-projective variety over $K$ and write $Z^p(X,n)=c^p(X,n) / d^p(X,n)$ for the quotient of the free abelian group $c^p(X,n)$ generated by integral closed algebraic subvarieties of codimension $p$ in $X\times \Box^n_K$ which are admissible (i.e. meeting all faces of all codimensions in codimension $p$ again -- or not at all) modulo the subgroup $d^p(X,n)$ of degenerate cycles (i.e. pull-backs of $X\times \text{facets},$ where a facet is a component of $\partial\Box^n$ by coordinate projections $\Box^n\rightarrow\Box^{n-1}$). These groups form a simplicial abelian group:
\renewcommand{\arraystretch}{0.3}
\[
\ldots Z^p(X,3) \begin{array}{l}\rightarrow \\ \rightarrow \\ \rightarrow \\ \rightarrow \end{array} Z^p(X,2) \begin{array}{l} \rightarrow \\ \rightarrow \\ \rightarrow \end{array} Z^p(X,1)\begin{array}{l} \rightarrow \\ \rightarrow \end{array} Z^p(X,0).
\]
\renewcommand{\arraystretch}{1}
\begin{definition}
Bloch's higher Chow groups $CH^p(X,n)$ are the homotopy groups of the above simplicial object or equivalently the homology groups of the above complex with respect to Bloch's boundary map given by 
\[
\partial_B:=\sum_i (-1)^{i-1}(\partial_i^0-\partial_i^\infty),
\]
where $\partial_i^0,\partial_i^\infty$ denote the restriction maps to the faces $z_i=0$ resp. $z_i=\infty$:
\[
CH^p(X,n):=\pi_n(Z^p(X,\bullet))=H_n(Z^p(X,\bullet),\partial_B).
\]
\end{definition}

\begin{theorem}[\cite{Voe}]
Assume that a field $K$ admits resolution of singularities and let $X$ be a smooth quasi-projective variety over $K$. Then Bloch's higher Chow groups are isomorphic to the motivic cohomology groups: 
\[
CH^p(X,n)\cong H_{\mathcal{M}}^{2p-n,p}(X,\MZ). 
\]
\end{theorem}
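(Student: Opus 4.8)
The plan is to realize both sides as hypercohomology of explicit complexes of Nisnevich sheaves with transfers on the category $Sm/K$ of smooth $K$-schemes and to compare those complexes directly. On the motivic side one has $H^{a,p}_{\mathcal{M}}(X,\MZ)=\mathbb{H}^a_{\mathrm{Nis}}(X,\MZ(p))$, where the weight-$p$ motivic complex $\MZ(p)=C_*(\MZ_{\mathrm{tr}}(\mathbb{G}_m^{\wedge p}))[-p]$ is the shifted Suslin complex of the presheaf with transfers represented by a smash power of $\mathbb{G}_m$. On the higher-Chow side $CH^p(X,n)=H_n(Z^p(X,\bullet))$ is the naive homology of the cycle complex recalled above; writing $\widetilde{Z}^p(-)^{\bullet}$ for the same complex reindexed cohomologically so that $Z^p(X,n)$ sits in degree $2p-n$, the identity $CH^p(X,n)\cong H^{2p-n,p}_{\mathcal{M}}(X,\MZ)$ decomposes into two assertions: (i) a quasi-isomorphism $\MZ(p)\simeq a_{\mathrm{Nis}}\widetilde{Z}^p(-)^{\bullet}$ in the derived category of Nisnevich sheaves with transfers on $Sm/K$, and (ii) that for smooth $X$ the hypercohomology of $a_{\mathrm{Nis}}\widetilde{Z}^p(-)^{\bullet}$ over $X$ is already computed by the naive global sections $\widetilde{Z}^p(X)^{\bullet}$.

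Assertion (ii) I would obtain from the localization theorem for higher Chow groups (Bloch, with the refinements due to Levine): it implies that the presheaf complex $U\mapsto Z^p(U,\bullet)$ satisfies Zariski, hence Nisnevich, descent on $Sm/K$, so that passing to the associated sheaf complex does not change cohomology over a smooth base. Here Levine's normalization quasi-isomorphism between the cubical and simplicial cycle complexes --- recalled in the excerpt --- lets one move freely between the two models, the simplicial one being the natural recipient of the Suslin-complex formalism.

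Assertion (i) is the heart of the matter. The bridge is the Friedlander--Voevodsky presentation of $\MZ(p)$ in terms of equidimensional cycles: using the cancellation theorem to trade $\mathbb{G}_m^{\wedge p}$ for the Thom space $\mathbb{A}^p/(\mathbb{A}^p\setminus\{0\})$, one identifies $\MZ(p)$, up to the shift $[-2p]$, with the Suslin complex $C_*(z_{\mathrm{equi}}(\mathbb{A}^p,0))$ of the presheaf sending a smooth scheme $U$ to the group of cycles on $U\times\mathbb{A}^p$ that are equidimensional of relative dimension $0$ over $U$. An equidimensional cycle is in particular \emph{admissible} in Bloch's sense, so there is a canonical comparison map $C_*(z_{\mathrm{equi}}(\mathbb{A}^p,0))\to\widetilde{Z}^p(-)^{\bullet}$, and (i) asserts that it is a quasi-isomorphism of complexes of Nisnevich sheaves on smooth $K$-schemes. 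This is precisely Bloch's moving lemma for higher Chow groups: every admissible cycle is, modulo the boundary, equivalent to one in equidimensional position relative to a prescribed stratification. The hypothesis that $K$ admit resolution of singularities is exactly what makes the projecting-cone and controlled-translation constructions underlying the moving lemma available at this level of generality; over a perfect field --- in particular over $\MF_p$, the case relevant to this paper --- or, after Voevodsky's later refinement, in arbitrary characteristic, it can be dropped.

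The main obstacle is therefore step (i), and within it the moving lemma. The derived-category bookkeeping of (ii), the cube-versus-simplex comparison, and the manipulations internal to the Suslin-complex formalism are essentially formal; but showing that the inclusion of equidimensional cycles into admissible cycles induces an isomorphism on homology requires genuine geometric input --- precise control over the intersection of cycles with all faces of all codimensions --- and is the technical core of the theorem.
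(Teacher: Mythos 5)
The paper does not prove this statement: it is quoted as background with the citation \cite{Voe}, so there is no internal argument to measure your proposal against. Judged on its own, your sketch reproduces the standard Friedlander--Suslin--Voevodsky architecture in the right order: realize $H^{2p-n,p}_{\mathcal{M}}(X,\MZ)$ as Nisnevich hypercohomology of $\MZ(p)$, identify $\MZ(p)[2p]$ with the Suslin complex of the presheaf $z_{\mathrm{equi}}(\mathbb{A}^p,0)$ of relative zero-cycles, map equidimensional cycles into Bloch's admissible ones, and use the localization theorem to see that the presheaf $U\mapsto Z^p(U,\bullet)$ satisfies descent so that sheafification costs nothing. This is a correct roadmap rather than a proof: the two load-bearing inputs --- localization and the equidimensional-versus-admissible comparison --- are named but not established, and you say so yourself. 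One smaller gloss: Zariski descent (which localization does give, via Mayer--Vietoris/Brown--Gersten) does not formally imply Nisnevich descent; closing that gap is a separate step in the published arguments.

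Two corrections of attribution and emphasis. First, the quasi-isomorphism $C_*(z_{\mathrm{equi}}(\mathbb{A}^p,0))(U)\to Z^p(U\times\mathbb{A}^p,\bullet)$ for affine $U$ is Suslin's generic-equidimensionality theorem (Suslin's moving lemma), not Bloch's moving lemma; Bloch's concerns moving admissible cycles into good position relative to a fixed closed subscheme and enters through functoriality and localization, not at this point. Second, and more relevant to the present paper: resolution of singularities is not known over $\MF_p$, so the theorem as stated above would not apply to the fields the authors actually study; the entire point of the cited reference \cite{Voe} is to remove that hypothesis (replacing the Friedlander--Voevodsky duality step by an argument based on Bloch--Levine localization), and your closing remark that the hypothesis can be dropped is precisely what that paper proves.
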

The higher Chow groups satisfy several formal properties as expected of motivic cohomology. In particular recall the well-known comparison theorem:
\begin{theorem}[\cite{levine:1994},\cite{bloch-ss},\cite{FS}]
Let $X$ be a smooth, quasi-projective variety of dimension $d$ over a field $K$. Let further $gr_\gamma^q K_n(X)$ be the $q$-th piece of the weight filtration of Quillen's $K$--theory of $X$. Then 
\[
gr_\gamma^q K_n(X)\otimes \mathbbm{Z}\left[\frac{1}{(n+d-1)!}\right] \cong CH^q(X,n)\otimes \mathbbm{Z}\left[\frac{1}{(n+d-1)!}\right]. 
\]
More generally, there is a spectral sequence 
\[
CH^{-q}(X,-p-q) \Rightarrow K_{-p-q}(X) 
\]
for an equidimensional scheme $X$ over $K$ abutting to $K$--theory and inducing the above isomorphism after tensoring with $\MQ$.
\end{theorem}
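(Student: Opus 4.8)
The plan is to obtain the asserted isomorphism as a consequence of the more general spectral sequence, which in turn is built from a filtration on $K$-theory by codimension of support. First I would set up, following Bloch, Friedlander--Suslin and Levine, the homotopy coniveau tower on (a spectrum representing) Quillen's $K$-theory of $X$: one works with the cosimplicial scheme $X\times\Box^\bullet$ and filters $K$-theory with supports by the codimension $q$ of the admissible closed supports appearing in the higher Chow complex $Z^\bullet(X,\bullet)$. The $q$-th layer $gr^q$ of this tower is a spectrum, and the decisive identification is $\pi_n\, gr^q \cong H_n(Z^q(X,\bullet),\partial_B)=CH^q(X,n)$, using Levine's comparison of the cubical and simplicial models along the way. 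The spectral sequence of the tower then has the form $E_1^{p,q}\cong CH^{-q}(X,-p-q)\Rightarrow K_{-p-q}(X)$; convergence is not formal but follows from the fact that $X$ has finite Krull dimension $d$ (so that only finitely many layers are nonzero in each total degree), which is where equidimensionality and quasi-projectivity of $X$ enter.

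Next, to extract the graded-pieces statement I would compare the coniveau filtration provided by the tower with the $\gamma$-filtration. Both $K_n(X)$ and the tower carry compatible Adams operations; by Soulé's theory of operations in algebraic $K$-theory the two filtrations agree after inverting a bounded integer, and rationally $K_n(X)\otimes\MQ$ decomposes into Adams eigenspaces with $gr^q_\gamma K_n(X)\otimes\MQ$ equal to the weight-$q$ piece. Since $E_1^{\cdot,q}$ lives purely in weight $q$, all differentials of the spectral sequence vanish after $\otimes\MQ$, the sequence degenerates, and one reads off $gr^q_\gamma K_n(X)\otimes\MQ\cong CH^q(X,n)\otimes\MQ$. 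To upgrade the coefficients from $\MQ$ to $\MZ[1/(n+d-1)!]$ one must bound the orders of the (integrally possibly nonzero) differentials $d_r$ and of the extension problems: a differential shifts weight by $r-1$, so on the range of the $E_1$-page relevant to total degree $n$ and tower length $d$ it is annihilated by an explicit integer, and a standard estimate shows this integer divides $(n+d-1)!$. Inverting that number kills every differential and every extension, giving the stated isomorphism $gr^q_\gamma K_n(X)\otimes\MZ[1/(n+d-1)!]\cong CH^q(X,n)\otimes\MZ[1/(n+d-1)!]$.

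The main obstacle is the identification $\pi_n\, gr^q\cong CH^q(X,n)$ of the layers of the coniveau tower with the higher Chow complex; this is the technical heart of the theorem (it is the content of Bloch--Lichtenbaum for $X=\mathrm{Spec}(K)$ and of Friedlander--Suslin and Levine in general). It rests on a moving lemma for higher Chow cycles: one must show that cycles on $X\times\Box^\bullet$ can be moved into admissible, properly intersecting position so that the localization sequences in $K$-theory with supports assemble — via Quillen's dévissage and homotopy invariance — into precisely the complex $(Z^q(X,\bullet),\partial_B)$. Proving this moving lemma over a general (in particular not necessarily infinite, here finite) base, together with checking that the resulting spectral sequence is multiplicative and compatible with Adams operations — which is exactly what legitimizes the weight argument of the previous paragraph — is the part I expect to be genuinely hard; the degeneration statement and the bookkeeping of denominators are comparatively formal once those inputs are available.
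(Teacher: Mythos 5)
The paper offers no proof of this statement at all: it is quoted verbatim from the literature, with the citations to Levine, Bloch--Lichtenbaum and Friedlander--Suslin standing in for the argument. So there is no ``paper's proof'' to compare against; what you have written is a reconstruction of what those references actually do, and as such it is an accurate outline. The homotopy coniveau tower, the identification of its layers with the complexes $Z^q(X,\bullet)$ via a moving lemma (the genuinely hard step, as you say --- this is exactly the content of Bloch--Lichtenbaum over a field and of Friedlander--Suslin and Levine in general), the compatibility with Adams operations, and the resulting degeneration with controlled denominators of the form $(n+d-1)!$ is precisely the architecture of the cited proofs. You are also right to flag that the multiplicativity/Adams-compatibility of the spectral sequence and the validity of the moving lemma over small (e.g.\ finite) base fields are the points where the real work lies; Levine's machinery is what resolves the latter. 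Since the theorem is used in this paper only as background and never invoked in the computations, nothing further is required here, but your sketch would serve as a correct roadmap to the actual proofs in \cite{levine:1994}, \cite{bloch-ss} and \cite{FS}.
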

\begin{remark}
In contrast to general algebraic varieties, higher Chow groups are well-known for number fields or finite fields: As remarked in \cite{Wei}, for prime powers $q=p^r$ one has 
\[
CH^2(\MF_q,3):= CH^2(\text{Spec}(\MF_q),3)\cong \MZ/(q^2-1)\MZ.
\]
\end{remark}

\section{Grassmannian homology}
In this section we will quickly review the definitions of the different variants of Grassmannian homology and their main properties from \cite{BPS, Ger}. Let $K$ be some field, and consider the ``coordinate simplex'' in the projective space $\MP^{p+q}_K$ over $K$ given by the $p+q+1$ hyperplanes, which are defined by the vanishing of one of the homogeneous coordinates in $\MP^{p+q}_K$, and their intersections.

\begin{definition}
We call two linear subspaces of $\MP^n_K$ of dimension $r$ and $s$ \textit{transverse}, if their intersection is of dimension less or equal to $r+s-n$. A linear subspace of $\MP^n_K$ of dimension $d$ is said to be transverse if its intersection with any part of the coordinate simplex of $\MP^n_K$ is transverse.
\end{definition}

One knows that the transverse subspaces of $\MP^{p+q}_K$ with given codimension $p$ form a subset $\MGh^p_q$ of the Grassmannian manifold $\MG^p_q$, and that the intersections with the $i$-th coordinate plane defines a map $A_i\colon \MGh^p_q\to\MGh^p_{q-1}, \, i\geq 0$. These maps satisfy $A_i\circ A_{i+1}=0$ for all $i\geq 0$ \cite{BPS}.
\begin{definition}
We denote by ${}^PCG^p_*$ the chain complex
\[
\ldots\xrightarrow{\partial_{p+3}}\MZ\MGh^p_2\xrightarrow{\partial_{p+2}}\MZ\MGh^p_1\xrightarrow{\partial_{p+1}}\MZ\MGh^p_0,
\]
where the differentials are defined by 
\[
\partial_q:=\sum_{i=0}^q(-1)^i\MZ A_i.
\]
The \textit{projective Grassmannian homology groups} of $K$ are given by the homology groups of this complex: 
\[
{}^PGH^p_q:=H_{p+q}({}^PCG^p_*). 
\]
\end{definition}
We are interested in the projective Grassmannian groups because of their connection with Bloch's higher Chow groups. As shown in \cite{BPS}, for any field $K$ there is an inclusion 
\[
{}^PGH_k^p(K)\hookrightarrow CH^p(\text{Spec}(K),p+k),
\]
which we shall now describe in more detail.

For this, we introduce a variant of this definition in the affine case: For all $n\in\mathbbm{N}$ we embed the $n$-dimensional affine space $\mathbbm{A}^n_K$ into $\mathbbm{P}^n_K$, considered as the set of lines in $K^{n+1}$ passing through the origin, by identifying $\mathbbm{A}^n_K$ with the affine hull of the canonical basis $e_1,\ldots, e_{n+1}$ of $K^{n+1}$. Thus, one may think of $\mathbbm{A}^n_K$ as $\mathbbm{P}^n_K\setminus H^n$, where $H^n$ is the hyperplane normal to the vector $(1,\ldots, 1)\in K^{n+1}$. As explained in \cite{Ger}, the intersections of $H^n\subset\mathbbm{A}^{n+1}_K$ with any coordinate hyperplane define $H^{n-1}$ considered as subspace of $\mathbbm{A}^n_K$, so that one obtains a subcomplex of ${}^PCG^p_*$:
\begin{definition}
 The complex $({}^HCG^p_*,\partial)$ is defined as the subcomplex of $({}^PCG^p_*,\partial)$ generated by all linear subspaces of $\MGh^p_*$ which are contained in $H^{p+*}$.

Further, we define ${}^ACG^p_q, q\geq 0,$ as the free abelian group generated by transverse subspaces of codimension $p$ in $\mathbbm{A}^{p+q}_K$, i.e. subspaces in $\mathbbm{P}^{p+q}_K$ of the same codimension whose intersection with $\mathbbm{A}^{p+q}_K$ embedded as described above is not empty. These groups also comprise a complex \cite{Ger}, which we denote by $({}^ACG^p_*,\partial)$. The homology of this complex, ${}^AGH^p_q:=H_{p+q}({}^ACG^p_*)$, is called \textit{affine Grassmannian homology}. 
\end{definition}
\begin{remark}
 As noted in \cite[Rem.3.4]{Ger}, this complex is not a subcomplex of $({}^PCG^p_*,\partial)$ but a quotient.
\end{remark}
\begin{proposition}{\cite[p. 90]{Ger}}
 There is a short exact sequence of chain complexes
\begin{equation}
\label{eq:cplx}
0\to {}^HCG^p_*\to {}^PCG^p_*\to {}^ACG^p_*\to 0. 
\end{equation}
\end{proposition}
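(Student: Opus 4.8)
The plan is to establish the sequence one degree at a time and then promote it to a sequence of complexes by checking compatibility with the differentials. Fix $q\geq 0$. The transverse linear subspaces of codimension $p$ in $\MP^{p+q}_K$ fall into two disjoint classes: those contained in the hyperplane $H^{p+q}$, and those not contained in it, i.e.\ those whose intersection with $\mathbbm{A}^{p+q}_K$ is non-empty. By construction the first class freely generates ${}^HCG^p_q$ (which is therefore a direct summand of $\MZ\MGh^p_q$), while the second class is exactly the set of free generators of ${}^ACG^p_q$. Hence for each $q$ one has a split short exact sequence of abelian groups
\[
0\to {}^HCG^p_q \xrightarrow{\iota_q} {}^PCG^p_q \xrightarrow{\pi_q} {}^ACG^p_q\to 0,
\]
where $\iota_q$ is the inclusion and $\pi_q$ kills the generators lying in $H^{p+q}$ and fixes the others.

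The next step is to verify that ${}^HCG^p_*$ is stable under the differential $\partial_q=\sum_{i=0}^q(-1)^i\MZ A_i$, so that $\iota$ becomes a morphism of complexes. It suffices to look at one transverse generator $V\subseteq H^{p+q}$: the map $A_i$ intersects $V$ with the $i$-th coordinate hyperplane, and since $V$ is transverse this intersection is again an element of $\MGh^p_{q-1}$. Writing $H^n\subset\MP^n_K$ as the hyperplane normal to $(1,\dots,1)$, its intersection with the $i$-th coordinate hyperplane is, in the remaining coordinates, the hyperplane normal to $(1,\dots,1)$ of $\MP^{p+q-1}_K$, i.e.\ $H^{p+q-1}$. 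Therefore $A_iV\subseteq H^{p+q-1}$ and $\partial_qV\in{}^HCG^p_{q-1}$. This coordinate identity is the observation of \cite{Ger} recalled before the definition, and it is really the only geometric input needed.

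Finally, since $\iota$ is a chain map, the differential of ${}^PCG^p_*$ descends to the quotient ${}^PCG^p_*/{}^HCG^p_*$, and it remains to identify this quotient complex with $({}^ACG^p_*,\partial)$. Degreewise this is the isomorphism induced by $\pi_q$; to see that it respects the differentials, evaluate both on the class of a generator $V$ with $V\not\subseteq H^{p+q}$. The quotient differential gives $\sum_i(-1)^i\,\overline{A_iV}$ with the convention $\overline{A_iV}=0$ whenever $A_iV\subseteq H^{p+q-1}$, and this is precisely the affine differential of \cite{Ger}, in which the faces of an affine transverse subspace that fall into $H^{p+q-1}$ are discarded. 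Thus the three complexes fit into the asserted short exact sequence \eqref{eq:cplx}. The main point to be careful about is this last matching of the induced quotient differential with the intrinsically defined affine one; everything else is bookkeeping around the partition of generators and the coordinate computation $H^{p+q}\cap\{x_i=0\}=H^{p+q-1}$.
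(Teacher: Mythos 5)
Your argument is correct. The paper itself offers no proof of this proposition --- it simply cites \cite[p.~90]{Ger} --- but what you have written is exactly the argument behind that citation: the degreewise partition of the transverse codimension-$p$ subspaces of $\MP^{p+q}_K$ according to whether or not they lie in $H^{p+q}$, the stability of the $H$-part under $\partial$ via the coordinate identity $H^{p+q}\cap\{x_i=0\}=H^{p+q-1}$, and the identification of the induced quotient differential with the affine one in which faces falling into $H^{p+q-1}$ are discarded --- the last point being precisely the content of the remark, recalled in the paper from \cite[Rem.~3.4]{Ger}, that $({}^ACG^p_*,\partial)$ is a quotient of $({}^PCG^p_*,\partial)$ rather than a subcomplex.
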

\begin{theorem}{\cite[Thm. 3.5]{Ger}}
There is an isomorphism 
\[
{}^AGH^p_q\cong H_{p+q}(\textup{GL}_p(K),\textup{GL}_{p-1}(K)),
\] 
where the pair of general linear groups is given by the inclusion 
\[
\textup{GL}_{p-1}\hookrightarrow \textup{GL}_p,\qquad A\mapsto \begin{pmatrix} A & 0 \\ 0 & 1 \end{pmatrix}.
\]
 In particular, there is an isomorphism for all $i\geq 1$:
\[
{}^AGH^i_0\cong K^M_i,
\]
where the group on the right hand side denotes the $i$-th Milnor $K$-group of $K$.
\end{theorem}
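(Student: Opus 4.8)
The plan is to rewrite ${}^ACG^p_*$ as the $\textup{GL}_p(K)$-coinvariants of a \emph{configuration complex} $\mathcal{C}^{(p)}_*$ of $\textup{GL}_p(K)$-modules, and then to compute its homology by comparing $\mathcal{C}^{(p)}_*$ with the bar resolution of $\textup{GL}_p(K)$ in a double complex. First I would dualise: a codimension-$p$ linear subspace $L\subset\MP^{p+q}_K$ amounts to the projection $\pi\colon K^{p+q+1}\twoheadrightarrow K^{p+q+1}/\tilde L$ onto the $p$-dimensional quotient by the linear span $\tilde L$ of $L$, taken up to the action of $\textup{GL}_p(K)$ on the target. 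One checks that transversality of $L$ to the coordinate simplex means exactly that the images $\pi(e_0),\dots,\pi(e_{p+q})$ of the standard basis vectors are in general position (every $p$ of them a basis of $K^p$), while the requirement that $L$ meet the affine chart $\mathbbm{A}^{p+q}_K$ becomes an explicit non-degeneracy condition on the tuple (the vectors should not lie on a common affine hyperplane missing the origin). Under this dictionary the intersection map $A_i$ corresponds to deleting the $i$-th vector, so ${}^ACG^p_*=(\mathcal{C}^{(p)}_*)_{\textup{GL}_p(K)}$, where $\mathcal{C}^{(p)}_q$ is the free abelian group on such $(p+q+1)$-tuples of vectors in $K^p$ with differential the alternating sum of the deletion maps; checking signs and the indexing convention ${}^AGH^p_q=H_{p+q}$ is routine linear algebra.

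Next I would observe that for every $q\geq 0$ the group $\textup{GL}_p(K)$ acts \emph{freely} on the generating tuples of $\mathcal{C}^{(p)}_q$ --- a general-position tuple of at least $p+1$ vectors is rigid, being pinned down by any of its basis sub-tuples --- so that each $\mathcal{C}^{(p)}_q$ is a free $\MZ[\textup{GL}_p(K)]$-module. In the double complex $B_\bullet(\textup{GL}_p(K))\otimes_{\MZ[\textup{GL}_p(K)]}\mathcal{C}^{(p)}_*$, with $B_\bullet$ the bar resolution of $\MZ$, taking homology first in the bar direction collapses one spectral sequence onto the bottom row $(\mathcal{C}^{(p)}_*)_{\textup{GL}_p(K)}$, since $H_{>0}(\textup{GL}_p(K);\mathcal{C}^{(p)}_q)=0$ and $H_0=(\mathcal{C}^{(p)}_q)_{\textup{GL}_p(K)}$ by freeness; hence the total homology of the double complex equals $H_*({}^ACG^p_*)={}^AGH^p_{*}$. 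The other spectral sequence has $E^2_{i,j}=H_i(\textup{GL}_p(K);H_j(\mathcal{C}^{(p)}_*))$, so the whole theorem comes down to computing the homology of the configuration complex $\mathcal{C}^{(p)}_*$ as a complex of $\textup{GL}_p(K)$-modules.

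This last computation is the heart of the matter and the step I expect to be the main obstacle: one must show that $\mathcal{C}^{(p)}_*$ is acyclic away from a single degree, where its homology is an explicit $\textup{GL}_p(K)$-module read off from the permutation module $\MZ[\textup{GL}_p(K)/\textup{GL}_{p-1}(K)]$ of the standard inclusion $\textup{GL}_{p-1}\hookrightarrow\textup{GL}_p$, so that, after accounting for the degree shift contributed by the bottom term $\mathcal{C}^{(p)}_0$, Shapiro's lemma together with the long exact homology sequence of the pair $(\textup{GL}_p,\textup{GL}_{p-1})$ produces $H_{p+q}(\textup{GL}_p(K),\textup{GL}_{p-1}(K))$. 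The natural tool for the acyclicity is a contracting homotopy of ``adjoin a sufficiently generic vector'' type, which splits off the leading term of the differential; such a vector exists at once when $K$ is infinite, whereas over a finite field $K$ the relevant configuration sets are small --- precisely the phenomenon that makes the explicit computations of the present paper feasible --- so there the homotopy, or a stability argument replacing it, has to be treated with care.

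Finally, the special case ${}^AGH^i_0\cong K^M_i$ is the instance $p=i$, $q=0$ of the general isomorphism combined with the Nesterenko--Suslin identification $H_i(\textup{GL}_i(K),\textup{GL}_{i-1}(K))\cong K^M_i(K)$; it can also be seen directly, since the bottom group $(\mathcal{C}^{(i)}_0)_{\textup{GL}_i(K)}$ modulo the image of $\partial$ matches a known presentation of Milnor $K$-theory by tuples of units.
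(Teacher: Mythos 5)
The paper itself offers no proof of this statement: it is imported verbatim from Gerdes \cite{Ger}, so there is no in-paper argument to measure yours against. Your outline does follow the route of the cited source: dualising a transverse codimension-$p$ subspace of $\MP^{p+q}_K$ to a $(p+q+1)$-tuple of vectors in general position in $K^p$, identifying ${}^ACG^p_*$ with the $\textup{GL}_p(K)$-coinvariants of the resulting configuration complex, exploiting freeness of the action on generators in a hyperhomology spectral sequence, and finishing with Shapiro's lemma and the Nesterenko--Suslin identification for the Milnor $K$-theory statement. Those reductions are sound; in particular the rigidity argument for freeness is correct, as is your translation of ``meets $\mathbbm{A}^{p+q}_K$'' into the affine-hyperplane condition, modulo the detail that deleting a vector can leave the admissible configurations and the corresponding face map must then be declared zero.

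The genuine gap is exactly the step you defer: the acyclicity of the configuration complex $\mathcal{C}^{(p)}_*$ away from a single degree. This is not a routine verification but the entire content of the theorem, and --- more importantly --- it \emph{fails} over finite fields, because there are too few vectors in general position for the ``adjoin a sufficiently generic vector'' homotopy to exist. Gerdes' argument requires $K$ infinite, and that hypothesis has been silently dropped in the statement as reproduced here. The failure is visible inside this very paper: the quoted computation of Br\"ahler gives ${}^AGH^2_0(\MF_2)\cong\MZ$, whereas $K^M_2(\MF_2)=0$ and $H_2(\textup{GL}_2(\MF_2),\textup{GL}_1(\MF_2))=H_2(S_3)=0$. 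So your plan cannot be completed as stated for the fields $\MF_2$, $\MF_3$ that this paper actually studies; you must either restrict to infinite $K$, where Suslin's genericity argument supplies the contracting homotopy, or settle for acyclicity only in a range of degrees depending on $\#K$ via a stability argument, which is precisely why the present paper resorts to explicit machine computation instead of invoking the theorem.
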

We are interested in a slightly different version of the affine Grassmannian homology (cf. \cite[Sect. 4.2]{Ger}) which is related to Bloch's higher Chow groups introduced in the section before. To define it, we need some more preparation:
\begin{definition}
 A linear subspace of $\mathbbm{A}^n_K$ is said to be \textit{affine transverse} if it does not intersect any $(p-1)$-dimensional stratum of the affine coordinate-simplex. Write $DG^p_q$ for the free abelian group generated by all affine transverse subspaces in $\mathbbm{A}^q_K$ of codimension $p$. One defines a differential $\partial_q\colon DG^p_q\to DG^p_{q-1}$ as (possibly empty) intersection with the faces of $\mathbbm{A}^q_K$.
\end{definition}

\begin{remark}
In other words, this complex is the subcomplex of Bloch's higher Chow complex from \cite{bloch:1986} which computes the higher Chow groups in codimension $p$ given by all chains of linearly embedded cycles. 
\end{remark}
\begin{remark}
It can be shown \cite[Prop. 4.6]{Ger} that ${}^AGH^p_l\cong H_{p+l}(DG^p_*)$.
\end{remark}
So, we can finally describe the map of complexes from the projective Grassmannian complex to Bloch's cycle complex more precisely: In view of the short exact sequence of complexes \eqref{eq:cplx}, a projective Grassmannian homology class is mapped to an affine one and then -- via the isomorphism just cited -- mapped onto a higher Chow cycle with fractional linear coordinate functions.

In general, one expects that Grassmannian homology already computes higher Chow groups of number fields \cite{BPS}:
\begin{conjecture}
 If $K$ satisfies the rank conjecture of Suslin, e.g. if $K$ is a number field, the cubical higher Chow groups $CH^p(K,p+q)$, $q\geq 0$, are generated by fractional linear cycles.
\end{conjecture}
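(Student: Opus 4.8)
The plan is to recast the conjecture as a surjectivity statement about a single natural map and then to transport that statement through algebraic $K$-theory, where the rank conjecture can be applied. First I would observe that the subgroup of $CH^p(K,p+q)$ generated by fractional linear cycles is precisely the image of the homomorphism induced on homology by the inclusion $DG^p_*\hookrightarrow Z^p(K,\bullet)$ of the linear cycle subcomplex into Bloch's complex, since by definition the generators of $DG^p_*$ are exactly the fractional linear cycles. Combining the isomorphism ${}^AGH^p_q\cong H_{p+q}(DG^p_*)$ with Ger's identification ${}^AGH^p_q\cong H_{p+q}(\textup{GL}_p(K),\textup{GL}_{p-1}(K))$ from \cite{Ger}, the conjecture becomes equivalent to the claim that the resulting map
\[
\rho\colon H_{p+q}(\textup{GL}_p(K),\textup{GL}_{p-1}(K))\longrightarrow CH^p(K,p+q)
\]
is surjective for every $q\geq 0$.

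Next I would factor $\rho$ through Quillen's $K$-theory. The relative group homology on the left maps, via the rank filtration on $H_*(\textup{GL}(K))$, to a graded piece of $K_{p+q}(K)$; on the other side, the comparison theorem of \cite{levine:1994,bloch-ss,FS} identifies $gr_\gamma^p K_{p+q}(K)$ with $CH^p(K,p+q)$ after inverting $(p+q-1)!$ (here $\dim\,\textup{Spec}(K)=0$). Suslin's rank conjecture is exactly the assertion that the rank filtration agrees with the $\gamma$-filtration and that the rank-$p$ stratum $H_{p+q}(\textup{GL}_p(K),\textup{GL}_{p-1}(K))$ surjects onto $gr_\gamma^p K_{p+q}(K)$; on the diagonal $q=0$ this recovers the known isomorphism ${}^AGH^i_0\cong K^M_i$. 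Granting the hypothesis, this yields surjectivity of $\rho$ after inverting $(p+q-1)!$; for number fields the full motivic formalism is moreover available, since characteristic zero supplies resolution of singularities and hence Voevodsky's comparison $CH^p(K,n)\cong H_{\mathcal{M}}^{2p-n,p}(K,\MZ)$ of \cite{Voe}.

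The main obstacle is twofold. First, the comparison theorem provides only an isomorphism after inverting $(p+q-1)!$, so the argument above proves the conjecture only up to torsion at primes $\ell\leq p+q-1$; obtaining the integral statement requires controlling this torsion separately, for instance through the motivic spectral sequence together with the known \'etale-descent (Quillen--Lichtenbaum) description of $K_*(K)$ for number fields. Second, and more delicate, one must verify that the three constructions involved---the inclusion ${}^PGH^p_k(K)\hookrightarrow CH^p(K,p+k)$ of \cite{BPS} factored through $DG^p_*$, Ger's isomorphism with relative $\textup{GL}$-homology, and the rank-filtration map into $gr_\gamma K$---assemble into a single commutative diagram, so that the surjectivity furnished by the rank conjecture genuinely transfers to the geometric map $\rho$. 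Establishing this compatibility at the chain level is the technical heart of the proof, since each of the three maps is built by entirely different means.

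As a consistency check, I would note that the hypothesis enters only at the $K$-theoretic step, and that the expected failure of the rank conjecture for finite fields is compatible with the present computations: the results of this paper show that for $\MF_2$ and $\MF_3$ the analogue of $\rho$ has image zero, whereas $CH^2(\MF_p,3)\cong\MZ/(p^2-1)\MZ$ is nontrivial, so finite fields lie outside the scope of the conjecture exactly as its formulation requires.
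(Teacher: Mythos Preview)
The statement you are trying to prove is recorded in the paper as a \emph{conjecture}; the paper offers no proof, noting only in the subsequent remark that Gerdes established the special cases $q=0,1$. There is therefore no proof in the paper to compare your proposal against.

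Your proposal is a strategic outline rather than a proof, and you yourself name the two places where it breaks. First, the comparison theorem only gives an isomorphism after inverting $(p+q-1)!$, so your argument yields at most the statement modulo small-prime torsion; the appeal to Quillen--Lichtenbaum and \'etale descent to recover the integral statement is a hope, not a verified step. Second, the commutativity of the diagram linking the inclusion $DG^p_*\hookrightarrow Z^p(K,\bullet)$, Gerdes' isomorphism, and the rank-filtration map into $gr_\gamma K_*$ is asserted but not checked; you correctly flag this as ``the technical heart,'' which means the argument is incomplete precisely at its load-bearing point. There is also an imprecision in your formulation of Suslin's rank conjecture: the standard statement concerns the stability range of $H_n(\textup{GL}_p)\to H_n(\textup{GL})$, not directly the coincidence of rank and $\gamma$-filtrations; passing from one to the other already requires nontrivial input, which is part of what Gerdes supplies in the cases $q=0,1$ he actually proves. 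In sum, your outline articulates the expected mechanism behind the conjecture, but it is not a proof---consistent with the paper's choice to leave the statement open.
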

\begin{remark}
The latter conjecture is a theorem of Gerdes for $q=0,1$ by \cite{Ger}.
 \end{remark}

As we are interested in finite fields, these results do not help very much. Here, we show the difference between higher Chow groups and Grassmannian homology groups of finite fields. One already knows (cf. Br\"ahler's thesis \cite{Brae}): 
\begin{proposition}
Let $q$ be a prime power, and assume $d\geq q$. Then 
\[
{}^PGH^d_k(\MF_q)\cong\begin{cases} \MZ^{(q-1)^d-1}\oplus\MZ, & k=0, d\equiv 0\, (2), \\ 0, & \text{else.} \end{cases}
\]
On the other hand
\[
{}^AGH^d_k\MF_q\cong \begin{cases} \MZ^{\frac{(q-1)^{d+1}+(-1)^d}{d}}, & k=0 \\ 0, & \text{else.}  \end{cases}
\]
In addition, Br\"ahler claims that by manual matching he obtained the following result: ${}^PGH^2_1(\MF_3)\cong \MZ^5$ and ${}^PGH^2_0(\MF_3)\cong 0$. 
\end{proposition}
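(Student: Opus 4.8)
The displayed facts are collected from Br\"ahler's thesis \cite{Brae}, and I would reconstruct them as follows. The engine behind the first two formulas is a finite-field phenomenon: over $\MF_q$ there are very few transverse linear subspaces once the codimension $d$ is large. Writing $V_L\subseteq\MF_q^{d+j+1}$ for the $(j{+}1)$-dimensional subspace spanning a codimension-$d$, dimension-$j$ linear subspace $L\subset\MP^{d+j}_{\MF_q}$, a short linear-algebra computation shows that $L$ is transverse to the whole coordinate simplex if and only if $V_L$ meets every coordinate subspace of dimension $d$ only in $0$; in coding-theoretic language this says that every $(j{+}1)$-subset of coordinates is an information set for $V_L$, i.e. that $V_L$ is an $[d+j+1,\,j+1]$ MDS code over $\MF_q$. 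For $j\ge1$ such a code has dimension $\ge2$, and the elementary bound --- bring a generator matrix into the form $[\,I_{j+1}\mid A\,]$, rescale the columns of $A$ so that its top row is all ones, and observe that the second row must then consist of pairwise distinct nonzero scalars --- forces its codimension $d$ to be at most $q-1$. Hence for $d\ge q$ one has $\MGh^d_j=\emptyset$ for every $j\ge1$, so the projective Grassmannian complex ${}^PCG^d_\ast$ collapses essentially to its bottom term $\MZ\MGh^d_0$, free on the $(q-1)^d$ transverse points of $\MP^d_{\MF_q}$; the remaining homology is read off from the only surviving boundary (intersection of a transverse point with the coordinate hyperplanes), whose net effect is governed by the alternating sum $\sum_{i=0}^{d}(-1)^i$ --- the source of the even/odd dichotomy and of the isolated copy of $\MZ$ in the statement. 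The affine case is handled in the same spirit: the MDS bound annihilates all but a bounded range of the generating sets ${}^ACG^d_j$ once $d\ge q$, after which one evaluates the short residual complex directly, compatibly --- via the exact sequence \eqref{eq:cplx} --- with Gerdes's identification of ${}^AGH^d_q$ with the relative group homology $H_{d+q}\bigl(\textup{GL}_d(\MF_q),\textup{GL}_{d-1}(\MF_q)\bigr)$.

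The final assertion, Br\"ahler's claim for $\MF_3$, is of a genuinely computational nature and lies outside the range above, since $d=2<q=3$. Here $\MGh^2_0$ and $\MGh^2_1$ are both nonempty while $\MGh^2_2=\emptyset$ (no $[5,3]$ MDS code exists over $\MF_3$, as $\MP^1(\MF_3)$ has only four points), so ${}^PCG^2_\ast$ is concentrated in degrees $2$ and $3$ and is entirely determined by the single integral boundary matrix $\partial_3\colon\MZ\MGh^2_1\to\MZ\MGh^2_0$, with ${}^PGH^2_1(\MF_3)=\ker\partial_3$ and ${}^PGH^2_0(\MF_3)=\operatorname{coker}\partial_3$. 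To verify the claim I would list all transverse lines of $\MP^3_{\MF_3}$ and all transverse points of $\MP^2_{\MF_3}$ explicitly, write down $\partial_3$, and compute its kernel and cokernel from a Smith normal form.

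The delicate step is precisely this last one. Even though $\partial_3$ is not a large matrix, it is large enough that a by-hand ``manual matching'' is hard to trust; in fact the main theorem of the present note, obtained by machine, shows that the correct value is ${}^PGH^2_1(\MF_3)=0$ rather than $\MZ^5$. I would therefore regard the last line of the proposition as a report of Br\"ahler's earlier computation, the reproduction --- and correction --- of that computation by an independent, machine-verified enumeration being the real point to be settled.
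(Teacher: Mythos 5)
The paper offers no proof of this proposition at all: it is quoted verbatim from Br\"ahler's thesis \cite{Brae} ("One already knows (cf. Br\"ahler's thesis)"), so there is no in-paper argument to compare yours against. Your reconstruction of the finiteness mechanism is the right idea and is correct: identifying a transverse codimension-$d$, dimension-$j$ subspace with an MDS code (equivalently, for $j=1$, with $d+2$ pairwise distinct points of $\MP^1_{\MF_q}$, which has only $q+1$ points) does show $\MGh^d_j=\emptyset$ for all $j\ge 1$ once $d\ge q$, and your analysis of the $\MF_3$, $d=2$ case ($\MGh^2_2=\emptyset$, complex concentrated in two degrees, everything reduced to one integer matrix) is accurate; your closing paragraph also matches the paper's own stance, since the paper's main theorem later refutes ${}^PGH^2_1(\MF_3)\cong\MZ^5$.

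There is, however, a genuine gap in how you pass from the collapse of the complex to the displayed answers. With the complex as defined in this paper, ${}^PCG^d_*$ terminates at $\MZ\MGh^d_0$ in homological degree $d$, and a transverse point of $\MP^d$ has \emph{empty} intersection with every coordinate hyperplane, so there is no outgoing differential from $\MZ\MGh^d_0$. Hence once $\MGh^d_1=\emptyset$ your argument yields ${}^PGH^d_0\cong\MZ\MGh^d_0\cong\MZ^{(q-1)^d}$ for \emph{every} $d\ge q$, with no dependence on the parity of $d$; the alternating sum $\sum_{i=0}^d(-1)^i$ you invoke has nothing to act on and cannot produce the claimed vanishing for odd $d$ (nor, by itself, the distinguished extra summand $\MZ$ in the even case, which suggests an augmentation). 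Reproducing Br\"ahler's formulas therefore requires whatever augmented or differently truncated complex his thesis actually uses, which your sketch does not identify; the same applies to the affine formula, which you do not derive (and which, as printed, is not even obviously an integer for all $d\ge q$ --- compare $q=3$, $d=4$ --- a sign that the statement is being transcribed rather than proved). Since the paper itself only cites these formulas, this does not undermine anything downstream, but as a proof of the proposition your argument is incomplete at exactly the step where the stated group structures are supposed to emerge.
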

\begin{remark}
 We shall disproof the last assertion of Br\"ahler and show the vanishing of ${}^PGH^2_1(\MF_3)$.
\end{remark}

\section{Explicit cycles in higher Chow groups and Grassmannian homology}
Our aim is to find explicit generators and relations for Grassmannian homology groups. For this we introduce some notation for cycles in higher Chow groups:
\begin{definition}
Given a map $\phi\colon (\mathbbm{P}_K^1)^n\rightarrow (\mathbbm{P}_K^1)^m$ for some $n,m\in\mathbbm{N}$, let $Z_\phi$ be the cycle $\phi_*((\mathbbm{P}_K^1)^n)\cap\Box^m$ associated to $\phi$ in the sense of \cite[sect. 1.4]{Ful}. Then for $x=(x_1,\ldots,x_n)$ one defines
\[
\left[\phi_1(x),\ldots,\phi_m(x) \right]:=Z_{(\phi_1(x),\ldots,\phi_m(x))}.
\]
\end{definition}

\begin{remark}
Unlike the papers \cite{HGaSM99,OP09} we will not only consider the so-called Totaro curves in $Z^2(K,3)$, which have proven sufficient to write down explicit generators for (cubical) higher Chow groups of codimension two over some number fields \cite{OP09}. In this paper we will be concerned with the most general (cubical) fractional linear cycles of the form
\[
\left[\frac{a_1x+b_1y+c_1}{d_1x+e_1y+f_1},\ldots, \frac{a_4x+b_4y+c_4}{d_4x+e_4y+f_4}\right]\in Z^2(K,4)
\]
and
\[
\left[\frac{a_1x+b_1}{c_1x+d_1},\frac{a_2x+b_2}{c_2x+d_2},\frac{a_3x+b_3}{c_3x+d_3} \right]\in Z^2(K,3)
\]
with $a_1,\ldots, f_4\in K$ such that the denominators are all non-zero.
\end{remark}
The idea of the present article is to consider the image of the Grassmannian homology of finite fields inside all admissible cubical higher Chow cycles, and in particular its cokernel. An analogous map has been studied for number fields, and in the simplicial setting in \cite{Ger}. There it is also shown that this analogous map induces a rational isomorphism for fields satisfying the rank conjecture.

\begin{remark}
\label{rem:admissible}
In the rest of the article one especially has to care about the admissibility condition mentioned in the survey on higher Chow groups. Totaro cycles are known to be admissible. Moreover, a cycle $\mathcal{Z}=[f(x),g(x),h(x)]$ is admissible if and only if every zero or pole of one of the rational functions which is also a zero or pole of another one is contained in the preimage of $1$ of the third function.

Later in this paper we will discuss admissibility issues. In particular, the algorithmic check for admissibility will be explained.
\end{remark}

To simplify our computations in the quotient $Z^2(K,3) / \partial Z^2(K,4)$, we divide out an acyclic subcomplex of $Z^2(K,\bullet)$ consisting of cycles with a constant coordinate on the left-hand side.
\begin{lemma}
 The following subcomplex of $Z^2(K,\bullet)$ is acyclic:
\begin{align*}
Z'(K,\bullet):= &\ldots\rightarrow Z^1(K,1)\otimes Z^1(K,3)\rightarrow Z^1(K,1)\otimes Z^1(K,2) \\
&\rightarrow Z^1(K,1)\otimes \partial Z^1(K,2) \rightarrow 0
\end{align*}
 %resp.
%$$Z''(K,\bullet):= \ldots\rightarrow Z^1(K,3)\otimes Z^1(K,1)\rightarrow Z^1(K,2)\otimes Z^1(K,1) \rightarrow \partial Z^1(K,2)\otimes Z^1(K,1) \rightarrow 0.$$
\end{lemma}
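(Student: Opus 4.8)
The plan is to peel the constant first coordinate off every generator, recognise $Z'(K,\bullet)$ as built out of a truncation of the codimension-one cubical complex of $K$, and finish with the classical computation of $CH^1$ of a field.

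First I would look at a generator $\{c\}\times W$ of $Z'(K,n)$, where $\{c\}\in Z^1(K,1)$ is a constant and $W\in Z^1(K,n-1)$. The constant $c$ is necessarily a closed point of $\mathbbm{G}_{m,K}\setminus\{1\}$: the values $0$ and $\infty$ are excluded by admissibility, since on a face $z_1\in\{0,\infty\}$ the cycle would drop to codimension one. Two things are then routine: the product $\{c\}\times W$ is degenerate if and only if $W$ is, and, because $c\neq 0,\infty$, Bloch's boundary annihilates the first pair of faces, so that $\partial_B(\{c\}\times W)=-\,\{c\}\times\partial_B W$. Hence $Z'(K,\bullet)$ is the direct sum, over the closed points $c$ of $\mathbbm{G}_{m,K}\setminus\{1\}$, of the subcomplexes $Z'_c(K,\bullet)$ of cycles whose first coordinate is identically $c$ (these summands are independent because distinct $c$ yield cycles with disjoint images under the first projection), and forgetting the coordinate $z_1=c$ identifies each $Z'_c(K,\bullet)$ with the complex $D_\bullet$ obtained from the codimension-one cubical complex $(Z^1(K,\bullet),\partial_B)$ by replacing its degree-one term $Z^1(K,1)$ with the subgroup of boundaries $\partial Z^1(K,2)$ (up to an overall shift and signs in the differential). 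Equivalently, $Z'(K,\bullet)\cong Z^1(K,1)\otimes_{\mathbbm{Z}}D_\bullet$, with $Z^1(K,1)=c^1(\text{Spec}(K),1)$ free abelian, its degenerate subgroup vanishing for dimension reasons.

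It then suffices to prove that $D_\bullet$ is acyclic. The differential $Z^1(K,2)\to\partial Z^1(K,2)$ is surjective by construction, so $H_1(D_\bullet)=0$; restricting the codomain of a homomorphism to its image does not change the kernel, so $H_2(D_\bullet)=\ker(\partial_B\colon Z^1(K,2)\to Z^1(K,1))/\operatorname{im}(\partial_B\colon Z^1(K,3)\to Z^1(K,2))=H_2(Z^1(K,\bullet))$; and $H_m(D_\bullet)=H_m(Z^1(K,\bullet))$ for every $m\ge 3$. By definition $H_m(Z^1(K,\bullet))=CH^1(\text{Spec}(K),m)$, which is classically trivial for all $m\ge 2$ (and is $K^\times$ for $m=1$): one may invoke Bloch's identification of the codimension-one higher Chow groups with Zariski cohomology of $\mathbbm{G}_m$ \cite{bloch:1986}, or the comparison with motivic cohomology together with $H^{2-m}_{\mathcal M}(K,\mathbbm{Z}(1))=0$ for $m\ge 2$. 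Thus $D_\bullet$ is acyclic, and therefore so is $Z'(K,\bullet)$ — as a direct sum of acyclic complexes, or because the free abelian group $Z^1(K,1)$ is flat over $\mathbbm{Z}$.

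The only step I expect to require genuine care is the first one: checking that the decomposition is faithful — that no generator is killed by degeneracy under the first projection and that there are no unexpected relations among the cycles $\{c\}\times W$ inside $Z^2(K,n)$ — and keeping the signs in the differential consistent. Once that bookkeeping is settled, the acyclicity is pure homological algebra together with the standard vanishing of the codimension-one higher Chow groups of a field in homological degrees $\ge 2$.
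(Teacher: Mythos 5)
Your argument is correct and is essentially the one the paper intends: the paper's proof consists solely of a citation to Nart's computation of the codimension-one Bloch complex of a field (homology $K^\times$ concentrated in degree one), which is exactly the key input you reconstruct after peeling off the constant first coordinate and observing that truncating by $\partial Z^1(K,2)$ kills the $K^\times$ in degree one. The reduction to $Z^1(K,1)\otimes D_\bullet$ with $Z^1(K,1)$ free, and the bookkeeping on degeneracy and signs, are the routine details the paper leaves implicit.
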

\begin{proof}
Based on \cite[p. 326-327]{nart:1995}.
\end{proof}
\begin{definition}
 We set $C^2(K,\bullet):= Z^2(K,\bullet) / Z'(K,\bullet)$.
\end{definition}
\begin{remark}
In order to further simplify and speed up computations, we choose to compute a variant of projective Grassmannian homology: A generic fractional linear cycle in $C^2(K,4)$ looks like 
\[
\left[\frac{a_1x+b_1xy+c_1y+d_1}{e_1x+f_1xy+g_1y+h_1},\ldots,\frac{a_4x+b_4xy+c_4y+d_4}{e_4x+f_4xy+g_4y+h_4}  \right], \, a_1,b_1,\ldots, h_4\in K.
\]
We shall restrict ourselves to the case that the coefficients of $xy$ all vanish. Certainly this enlarges the quotient $C^2(K,3) / \partial C^2(K,4)$, i.e. there are the same curves in $C^2(K,3)$ but less relations among them. But as we will show the vanishing of the homology of the enlarged quotient, the vanishing of the smaller quotient is guaranteed.
\end{remark}

\section{Some lemmas}
\label{sec:lemmas}
In this section, we briefly recall some results from \cite[Sect. 4]{OP09} which we will make use of in the sequel:
\begin{proposition}{\cite[Prop. 4.6]{OP09}}
 Let $f, g, h_1, h_2$ be rational functions of one variable $x$ such that all cycles occurring are admissible. Then the following identities hold in $C^2(K,3)/\partial C^2(K,4)$:
\[
\left[f(x),g(x),h_1(x)h_2(x)\right] = \left[f(x),g(x),h_1(x)\right]+\left[f(x),g(x),h_2(x)\right].
\]
\end{proposition}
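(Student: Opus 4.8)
The plan is to reduce the multiplicativity statement in the third (right-hand) slot to the boundary of a single explicit cycle in $C^2(K,4)$. The natural candidate is the ``product trick'' cycle
\[
\mathcal{W} := \left[f(x),\, g(x),\, h_1(x),\, \frac{h_1(x)h_2(x)}{h_1(x)} \cdot \text{(suitable normalization)}\right],
\]
or, more usefully, a cycle built from the classical identity expressing $[\cdots, uv]$ via a two-variable interpolation between $u$ and $v$. Concretely, I would introduce the second cube coordinate $y$ and consider
\[
\mathcal{W} := \left[f(x),\, g(x),\, h_1(x),\, \frac{y\,h_2(x) + (1-y)}{\,y + (1-y)h_2(x)^{-1}\,}\right]
\]
or, more transparently, the cycle $\left[f(x), g(x), h_1(x)\cdot\frac{h_2(x)}{1}, \cdot\right]$ degenerated appropriately; the key point is to pick $\mathcal W\in C^2(K,4)$ whose Bloch boundary $\partial_B\mathcal W$, after passing to the quotient $C^2(K,3)/\partial C^2(K,4)$ and killing degenerate and $Z'$-terms, equals exactly
\[
\left[f,g,h_1h_2\right] - \left[f,g,h_1\right] - \left[f,g,h_2\right].
\]
Since the first two slots $f(x),g(x)$ are inert under the relevant face maps (they depend only on $x$, not on the auxiliary variable $y$), the computation of $\partial_B\mathcal W$ reduces to the one-variable identity $[\,h_1h_2\,] = [\,h_1\,] + [\,h_2\,]$ for $1$-cycles, which is essentially the statement that the symbol map is multiplicative — this is standard and is precisely the $n=1$, $p=1$ shadow of the $K$-theoretic picture.

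The concrete steps, in order, are: (i) write down $\mathcal W\in c^2(K,4)$ explicitly as a graph-type cycle $[f(x),g(x),\phi(x,y),\psi(x,y)]$ with $\phi,\psi$ chosen so that the $y=0$ and $y=\infty$ faces produce $[f,g,h_1,h_1]$-type and $[f,g,h_1,h_2]$-type contributions while the $x$-faces contribute only degenerate cycles or cycles already lying in $Z'(K,\bullet)$; (ii) verify admissibility of $\mathcal W$ and of all its faces using the criterion recalled in Remark~\ref{rem:admissible} — here one uses that $f,g,h_1,h_2$ and all displayed products are assumed to give admissible cycles, so the only new conditions involve the interpolating variable and can be arranged by the explicit choice of $\phi,\psi$; (iii) compute $\partial_B\mathcal W = \sum_i (-1)^{i-1}(\partial_i^0 - \partial_i^\infty)\mathcal W$ slot by slot, discarding degenerate faces and faces landing in $Z'(K,\bullet)$ (e.g.\ any face with a constant left-hand coordinate), and collect the surviving terms; (iv) read off that the surviving terms are exactly $\pm([f,g,h_1h_2] - [f,g,h_1] - [f,g,h_2])$ modulo the subcomplex we have divided out, which gives the identity in $C^2(K,3)/\partial C^2(K,4)$.

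The main obstacle I expect is step (ii) together with the bookkeeping in step (iii): one must choose the interpolating cycle $\mathcal W$ so that simultaneously (a) $\mathcal W$ itself is admissible — every common zero/pole of two of its four coordinates must lie in the preimage of $1$ of another — and (b) the ``garbage'' faces (the $x$-direction faces and the degenerate-looking $y$-faces) genuinely vanish in the quotient $C^2(K,\bullet)$, i.e.\ are either degenerate or have a constant left coordinate and hence lie in $Z'(K,\bullet)$. Getting both at once may force a slightly cleverer choice of $\phi$ and $\psi$ than the naive linear interpolation; in the worst case one writes $\mathcal W$ as a $\MZ$-linear combination of two or three graph cycles rather than a single one. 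Once the right $\mathcal W$ is in hand, the remaining computation is the routine face calculus already used throughout \cite{OP09}, and the reduction to the one-variable multiplicativity of symbols is immediate.
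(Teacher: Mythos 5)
The first thing to say is that the paper does not prove this proposition at all: it is quoted verbatim from \cite[Prop.~4.6]{OP09} (the technique going back to \cite{HGaSM99}), so the only meaningful comparison is with the proof given there. Your overall strategy is exactly that proof's strategy: exhibit one explicit admissible cycle $\mathcal W\in C^2(K,4)$ whose Bloch boundary, after discarding the faces that are degenerate or have constant left coordinate (and hence die in the quotient by the acyclic subcomplex $Z'(K,\bullet)$), equals $[f,g,h_1]+[f,g,h_2]-[f,g,h_1h_2]$. So you have correctly identified the shape of the argument.

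The genuine gap is that in a proof of this kind the entire mathematical content is the explicit bounding cycle, and you never produce one that works; you even flag this yourself. Your concrete candidate fails outright: in
$\frac{y\,h_2(x)+(1-y)}{y+(1-y)h_2(x)^{-1}}$
the denominator is $h_2(x)^{-1}$ times the numerator, so this coordinate is identically equal to $h_2(x)$; the resulting cycle does not depend on $y$, is not a codimension-two surface in $\Box^4$, and its boundary cannot produce $[f,g,h_1h_2]$. The cycle that does the job is
\[
\mathcal W=\left[f(x),\;g(x),\;\frac{\bigl(y-h_1(x)\bigr)\bigl(y-h_2(x)\bigr)}{(y-1)\bigl(y-h_1(x)h_2(x)\bigr)},\;y\right]\in C^2(K,4),
\]
and the one idea you are missing is the factor $y-1$ in the denominator: it forces the third coordinate to be identically $1$ on the faces $y=0$ and $y=\infty$, so those faces miss $\Box^3$ entirely. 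The faces $z_3=0$ lie over $y=h_1(x)$ and $y=h_2(x)$ and contribute $[f,g,h_1]+[f,g,h_2]$; the face $z_3=\infty$ lies over $y=1$ (again empty in $\Box^4$) and $y=h_1(x)h_2(x)$, contributing $[f,g,h_1h_2]$; the faces $z_1,z_2\in\{0,\infty\}$ have constant first coordinate and vanish in $C^2$. Hence $\partial_B\mathcal W\equiv[f,g,h_1]+[f,g,h_2]-[f,g,h_1h_2]$ in $C^2(K,3)$, which is the assertion. The ``one-variable multiplicativity of symbols'' you appeal to is precisely what this divisor computation realizes; it cannot be invoked as an input, since the proposition \emph{is} that statement in the cycle-theoretic setting.
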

This immediately implies:
\begin{corollary}
 Let $f,g$ be rational functions of one variable $x$, further let $\zeta_n\in \Ks$ be some $n$-th root of unity in \Ks such that all cycles occurring are admissible: Then the following identities hold in $C^2(K,3)/\partial C^2(K,4)$:
\begin{align*}
n \left[x,f(x),\zeta_n\right] &= 0,  \\
n\left[ x, f(x), \zeta_ng(x)\right] &= \left[ x, f(x), (g(x))^n\right], \\
\left[x,f(x),g(x) \right] &= - \left[x,f(x),\frac{1}{g(x)} \right]. 
\end{align*}
\end{corollary}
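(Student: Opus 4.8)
The plan is to deduce all three relations from the multiplicativity statement of the preceding Proposition, together with one elementary observation: for any rational function $f$, the cycle $[x,f(x),1]$ is zero in $Z^2(K,3)$, since the map $(x,f(x),1)$ has image contained in the locus $\{z_3=1\}$, which is disjoint from $\Box^3=(\mathbbm{P}^1\setminus\{1\})^3$, so the associated cycle is empty; this zero cycle is in particular admissible. From the Proposition one then obtains, by induction on $m$, a ``power rule''
\[
[x,f(x),h(x)^m]=m\,[x,f(x),h(x)]
\]
in $C^2(K,3)/\partial C^2(K,4)$, valid whenever the cycles $[x,f(x),h(x)^j]$ for $1\le j\le m$ are all admissible; at the inductive step one factors $h^m=h^{m-1}\cdot h$ and applies the Proposition.

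Granting the power rule and the vanishing of $[x,f(x),1]$, the three identities follow formally. Taking $h=\zeta_n$ and $m=n$ and using $\zeta_n^n=1$ gives $n\,[x,f(x),\zeta_n]=[x,f(x),\zeta_n^n]=[x,f(x),1]=0$, which is the first relation. For the second, I would apply the Proposition to $\zeta_n g(x)=\zeta_n\cdot g(x)$ to get $[x,f(x),\zeta_n g(x)]=[x,f(x),\zeta_n]+[x,f(x),g(x)]$, multiply by $n$, and substitute the first relation together with the power rule for $h=g$, $m=n$; this yields $n\,[x,f(x),\zeta_n g(x)]=[x,f(x),g(x)^n]$. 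For the third, apply the Proposition to $g(x)\cdot g(x)^{-1}=1$, obtaining $[x,f(x),g(x)]+[x,f(x),1/g(x)]=[x,f(x),1]=0$, i.e. $[x,f(x),g(x)]=-[x,f(x),1/g(x)]$.

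There is essentially no obstacle here: the argument is a purely formal manipulation in the quotient group once the power rule is in hand. The only point requiring attention is that the Proposition may be invoked only when every cycle occurring in it is admissible, which forces the intermediate cycles $[x,f(x),\zeta_n^j]$, $[x,f(x),g(x)^j]$, $[x,f(x),\zeta_n g(x)]$, and $[x,f(x),1/g(x)]$ to be admissible as well. But this is exactly what the standing hypothesis ``such that all cycles occurring are admissible'' provides, so no further verification is needed.
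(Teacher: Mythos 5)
Your proposal is correct and is exactly the argument the paper intends: the Corollary is stated as an immediate consequence of the multiplicativity Proposition, and your derivation (emptiness of $[x,f(x),1]$ since the locus $z_3=1$ misses $\Box^3$, the inductive power rule, and the three formal substitutions) is the standard way to make that "immediately implies" explicit. No discrepancy with the paper's approach.
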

Furthermore, we have

\begin{proposition}{\cite[Prop. 4.13]{OP09}}
 Let $f, g, h$ be rational functions in one variable, and let all of the cycles be admissible. Then the following identity holds in the quotient $C^2(K,3)/\partial C^2(K,4)$:
\[
\left[f(x),g(x),h(x)\right] = -\left[f(x),h(x),g(x)\right]+\hspace{-0.5 em}\sum_{x_0\in div(f)}\hspace{-0.8 em}\pm Z(h(x_0),g(x_0)).
\]
Note that the sign is positive if $x_0$ is a zero and negative if $x_0$ is a pole of $f$.
\end{proposition}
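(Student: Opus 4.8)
The plan is to produce an explicit four-dimensional cube cycle $S$ and to show that its Bloch boundary $\partial_B S$ equals, up to cycles that vanish in $C^2(K,3)$, exactly the combination $[f,g,h]+[f,h,g]-\sum_{x_0\in\mathrm{div}(f)}\pm Z(h(x_0),g(x_0))$ that the proposition asserts to be zero; since $\partial_B S$ is by construction a boundary, this gives the identity. Conceptually, $S$ is a chain homotopy on $Z^2(K,\bullet)$ exhibiting the transposition $\tau$ of the last two cube coordinates as homotopic to $-\mathrm{id}$, refined so that the correction term it produces is supported only over the divisor of the \emph{first} coordinate. Concretely I would write down a rational surface of the form
\[
S=\bigl[\,f(x),\ A(x,t),\ B(x,t),\ t\,\bigr],
\]
with $A$, $B$ fractional linear in the auxiliary parameter $t$ and with coefficients rational in $x$, chosen so that $t\mapsto(A(x,t),B(x,t))$ interpolates between $(g(x),h(x))$ and the swapped pair $(h(x),g(x))$; a first guess is $A=\frac{g(x)+t\,h(x)}{1+t}$ and $B=\frac{h(x)+t\,g(x)}{1+t}$, but this must be replaced by a projectively equivalent normalisation so that every face of $S$ stays admissible and so that the two ``main'' terms $[f,g,h]$ and $[f,h,g]$ emerge from faces carrying the \emph{same} sign in $\partial_B$.

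The core computation is then the term-by-term evaluation of
\[
\partial_B S=(\partial_1^0-\partial_1^\infty)S-(\partial_2^0-\partial_2^\infty)S+(\partial_3^0-\partial_3^\infty)S-(\partial_4^0-\partial_4^\infty)S.
\]
Two of the eight faces should recover the main terms $[f(x),g(x),h(x)]$ and $[f(x),h(x),g(x)]$ (from the $t=0,\infty$ specialisations and from one of the $z_2$- or $z_3$-faces). The two $z_1$-faces are supported precisely over the zeros and poles of $f$: imposing $f(x)=0$ (resp.\ $f(x)=\infty$) forces $x=x_0\in\mathrm{div}(f)$ and leaves a one-parameter correction curve depending only on the constants $g(x_0)$ and $h(x_0)$, which one identifies with $Z(h(x_0),g(x_0))$, the $\partial_i^0-\partial_i^\infty$ pattern producing exactly the sign $+$ for a zero and $-$ for a pole. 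The remaining $z_2$- and $z_3$-faces are cycles of the shape $[f(x),g(x)+h(x),w(x)]$ with $w$ a monomial in $g$ and $h$, or cycles with a repeated or a constant coordinate; these get disposed of with the lemmas recalled above: additivity in the last slot, $[f,g,h_1h_2]=[f,g,h_1]+[f,g,h_2]$, and its immediate consequence $[A,B,w]=-[A,B,1/w]$ (apply additivity with $h_1=w$, $h_2=1/w$ and note $[A,B,1]=0$, since $1\notin\Box^1$), together with the vanishing in $C^2$ of any cycle with a constant coordinate $0$, $1$ or $\infty$ or lying in $Z'(K,\bullet)$. Hence these faces either cancel in reciprocal pairs or collapse onto contributions already accounted for, and collecting everything gives the claimed formula.

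The hard part will be the admissibility of $S$: that its intersection with every face of $\Box^4$ of every codimension is again of codimension two, or empty. This is where the hypothesis that $[f,g,h]$, $[f,h,g]$ and the cycles $Z(h(x_0),g(x_0))$ are admissible enters, though it is not formally implied by it; one must run through the possible coincidences among the zeros, poles and preimages of $1$ of $f$, $g$ and $h$ --- and the loci where a numerator of $A$ or $B$, or the interpolating denominator, vanishes --- and, in the delicate cases, either argue that the relevant intersection is still proper after passing to closures in $(\MP^1)^4$, or adjust the normalisation of $A$ and $B$ so as to avoid the degeneration. Hand in hand with this goes the requirement that the contributions of $\mathrm{div}(g)$ and $\mathrm{div}(h)$ to $\partial_B S$ be absent by construction or visibly degenerate, so that only $\mathrm{div}(f)$ survives. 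The remaining difficulty is the routine but error-prone tracking of signs; once the admissibility analysis is in place, the rest is the computation sketched above.
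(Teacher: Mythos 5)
The paper itself offers no proof of this proposition: it is quoted verbatim from \cite[Prop.~4.13]{OP09}, so the only meaningful comparison is with the method of that reference, which is indeed the one you outline --- exhibit an explicit admissible fractional-linear surface $S$ in $C^2(K,4)$ and read the relation off from $\partial_B S$. Your skeleton is right: the two main terms should come from faces in the auxiliary parameter, the $Z(h(x_0),g(x_0))$ corrections from the faces $f=0,\infty$ (which is why they are indexed by $\mathrm{div}(f)$ and carry the sign $+$ for zeros and $-$ for poles via $+(\partial_1^0-\partial_1^\infty)$), and the remaining faces must be disposed of with the multiplicativity and inversion lemmas.

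As a proof, however, there is a genuine gap: for a statement of this kind the entire mathematical content \emph{is} the explicit admissible cycle, and you never produce one. The candidate $S=\bigl[f,\tfrac{g+th}{1+t},\tfrac{h+tg}{1+t},t\bigr]$ fails on three counts, only the second of which you acknowledge. First, it is inadmissible: $A$ and $B$ have a common pole along $t=-1$, so $S$ meets the codimension-two face $\{z_2=z_3=\infty\}$ of $\Box^4$ in a curve rather than in points. Second, the $t$-faces enter $\partial_B$ as $-(\partial_4^0-\partial_4^\infty)S=-[f,g,h]+[f,h,g]$, which would prove symmetry rather than the asserted antisymmetry. Third, the auxiliary faces do not cancel as claimed: one computes $\partial_2^0S=[f,g+h,-g/h]$ and $\partial_3^0S=[f,g+h,-h/g]$, and since $[f,g+h,-h/g]=-[f,g+h,-g/h]$ by the inversion identity while $i=2$ and $i=3$ carry opposite signs in $\partial_B$, these contributions reinforce to $-2[f,g+h,-g/h]$ instead of vanishing. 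Each defect can be repaired by a different choice of interpolating functions (this is exactly what the cycle in \cite{OP09} does), but until a specific admissible $S$ is written down and all eight faces are computed with their signs, the identity is a plausible target rather than a theorem.
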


\section{Computational issues}
Now we explain the algorithm used to compute Grassmannian homology of finite fields. We used C++ as programming language and the GNU C++ compiler g++, version 4.3.2 coming with the open source operating system OpenSUSE 11.1. The linear algebra computations were done with the help of the open source computer algebra system SAGE \cite{sage}.

The computation of the Grassmannian homology groups in general can be split into several parts:
\begin{enumerate}
 \item Collect all admissible linearly embedded cycles in $C^2(K,3)$.
\item Compute a basis for the so-called Grassmannian cycles which are given by the kernel of $\partial\colon C^2(K,3)\to C^2(K,2)$ restricted to these fractional linear algebraic cycles in codimension two.
\item Compute a basis for the quotient $C^2(K,3)/\partial C^2(K,4)$ consisting of fractional linear cycles. This amounts to collecting admissible fractional linear cycles in $C^2(K,4)$, computing their boundaries, and computing the structure of the quotient using a Smith normal form.
\item Intersect the basis of the Grassmannian cycles with the basis of the quotient.
\end{enumerate}
In the following subsections we will describe the different steps in more detail, and in the next section we will present our results for finite fields $K=\MF_p$, $p= 2,3$.
 
\subsection{Computing admissible algebraic cycles in $C^2(K,3)$}
Using nested loops we checked all fractional linear cycles of the form 
\[
\left[\frac{a_1x+b_1}{c_1x+d_1},\frac{a_2x+b_2}{c_2x+d_2},\frac{a_3x+b_3}{c_3x+d_3} \right]\in C^2(K,3)
\]
for parameters $a_1,\ldots, d_3\in K$ for the admissibility condition mentioned in remark \ref{rem:admissible}.

To decrease the number of nested loops, we reparametrized cycles of the above shape by some M\"obius transformation such that the first coordinate simply reads $x$. Thus we are left with cycles of the form $\left[x, \frac{a_1x+b_1}{c_1x+d_1},\frac{a_2x+b_2}{c_2x+d_2} \right]\in C^2(K,3)$. Further, we decrease the number of iterations by ``normalizing'' coordinate functions: fractions of the form $\frac{ax+b}{cx+d}$ by are scaled by $d$, if $d\neq 0$, obtaining $\frac{ad^{-1}x+bd^{-1}}{cd^{-1}x+1}$. If $d = 0$, we scale by $c$ to obtain $\frac{ac^{-1}x+bc^{-1}}{x}$. The case $c = d = 0$ cannot occur because such a cycle is not admissible.

The check for the admissibility, non-triviality and (non-)degeneracy of these cycles is divided into several steps in the program: Check that
\begin{itemize}
 \item no coordinate equals $1$ (otherwise this cycle would not be contained in $\Box^3$),
\item (before the reparametrization) the leftmost coordinate is not constant (otherwise the cycle would be contained in the acyclic subgroup $Z^1(K,1)\otimes Z^1(K,2)\subset Z^2(K,3)$, which we divided out),
\item no coordinate equals $0$ or $\infty$ (otherwise the codimension of the intersection with $\Box^3$ would be wrong),
\item at most one coordinate is constant (otherwise the cycle would be degenerate),
\item if some $a\in K$ occurs more than once among the zeroes or poles of the coordinates, this $a$ is contained in the preimage of $1$ under the third coordinate function (otherwise the intersection of the cycle with $\Box^3$ has the wrong codimension).
\end{itemize}

\subsection{Computing Grassmannian cycles}
Having collected all admissible cycles in $C^2(K,3)$, we compute their boundaries. A generic cycle in $C^2(K,3)$ has a boundary of the form $\sum_i n_i(a_i,b_i)$ with $n_i\in \Ks, (a_i,b_i)\in C^2(K,2)$. Since we are working over finite fields, the points in $C^2(K,2)$ can be enumerated, and we assign to them the index $a_ib_i\in\Ks$. Note that this already implies the relation $(a_i,b_i)=(b_i,a_i)\in C^2(K,2)$.

In this way, we assign to each admissible cycle in $C^2(K,3)$ a vector of length $(\# \Ks-1)^2$ with nonzero entries $n_i$ at the indices $a_i b_i$. Note that we subtract $1$ for boundary points with a $1$ in one coordinate.

Assembling these row vectors in a matrix gives a matrix presentation of the Grassmannian boundary operator, whose kernel can be computed with SAGE.

More precisely, the image of this boundary operator lies in the quotient $C^2(K,2)=Z^2(K,2) / (Z^1(K,1)\otimes \partial Z^1(K,2))$, in which all linear combinations of points of the form $\sum_in_i\left([a,b]+[a,c]-[a,bc]\right)$ for $a,b,c\in K^\times$ vanish. Therefore, we have to invoke SAGE to compute the kernel of the morphism between finitely generated modules over $\MZ$.

\subsection{Finding Grassmannian boundaries}
As before, we use nested for-loops to build all possible elements in $C^2(K,4)$ of the form 
\[
\left[\frac{x}{y},\frac{a_1x+b_1y+c_1}{d_1x+e_1y+f_1},\frac{a_2x+b_2y+c_2}{d_2x+e_2y+f_2},\frac{a_3x+b_3y+c_3}{d_3x+e_3y+f_3}\right].
\]
Note that we already reparametrized as before in a way that the first coordinate is equal to $\frac{x}{y}$. Note also that one can again normalize the rational functions in a way that $a_i,\ldots, e_i, i=1,2,3$, runs through $0,\ldots, p-1$ whereas $f_1,f_2,f_3$ only take values in $\{0,1\}$.

The computation of the boundaries of all these possible fractional linear cycles is again split up into parts, i.e. subroutines in the program:
\begin{itemize}
\item Define a new variable $z:=\frac{x}{y}$ and substitute this in all coordinate functions: if then three or more coordinates depend on $z$ only, the cycle is degenerate and thus useless.
\item Check for right codimension of the intersection with $\Box^2$: The intersection with a face of $\Box^4$ of codimension $2$ must either be a point, i.e. must not depend on any of the variables any more, or one of the coordinates must be equal to $1$. This check is done by computing either the poles or the zeroes of two distinct coordinates and substituting these values in the remaining two coordinates.
\item To assure admissibility, $\infty$ may occur at most three times as zero or pole in the same variable. If this is not the case, there will appear inadmissible boundary terms which have $0$ or $\infty$ as zero and pole, but no coordinate equal to $1$ to guarantee admissibility.
\item Then compute the zeroes and poles of all coordinates of the fractional cycle in $C^2(K,4)$ and store them in a list. Also check if three or more coordinates are independent of the same variable: In this case the cycle would be degenerate.
\item Check another case of degeneracy: determine the most frequent expression of the form $ax+by$ with $a,b\in K^\times$ in the coordinates of the cycle in $C^2(K,4)$. Then plug in $z:=ax+by$ into the remaining coordinates and check whether the algebraic cycle then is degenerate, i.e. at least three of the four coordinates depend on $z$ only. If yes, it is again useless.
\item Then compute the boundary of the algebraic cycle in $C^2(K,4)$ as a list of triples of rational functions in one variable. Check the boundary terms for degeneracy as in the case of cycles in $C^2(K,3)$: In case one of the boundary terms is degenerate, the original cycle is so, as well. Also store the corresponding signs of the boundary terms in a second list.
\item Having passed all tests, one has to scale the cycles as explained before. Then one needs to check if two cycles are equal with same or inverse signs and if so: delete one copy or both and modify where necessary the corresponding list of signs. After that the computation of the boundary is finished: In particular, it is checked whether the boundary terms in $C^2(K,3)$ are non-degenerate, nonzero and do not cancel each other.
\end{itemize}

\begin{remark}
 The correct codimension of the intersection of a fractional linear cycle with a face of $\Box^4$ of codimension $3$ need not be checked since the correct intersection with all faces of codimension $2$ already implies this.
\end{remark}

The admissible, non-degenerate cycles in $C^2(K,4)$ and their boundaries are now determined. We proceed by storing the coordinates of the boundary terms in matrices: 
\[
\frac{ax+b}{cx+d}\mapsto \begin{pmatrix} a & b \\ c & d\end{pmatrix},
\]
which are normalized such that the matrix corresponding to the first coordinate is equal to the identity matrix $ \begin{pmatrix} 1 & 0 \\ 0 & 1\end{pmatrix}$. According to the results in section \ref{sec:lemmas} the normalized boundary terms are simplified, united and finally stored in a list.

\subsection{Computing homology}
The main step consists of constructing a relation matrix: Each normalized fractional cycle occurring among the boundary terms of the admissible, non-degenerate cycles in $C^2(K,3)$ is identified with one column in a big matrix, and each of these cycles in $C^2(K,4)$ is identified with one row. The entries in this matrix are given by the coefficients of the terms in $C^2(K,3)$ occurring in the boundary of one of the terms in $C^2(K,4)$.

Computing the Smith normal form, and in particular the elementary divisors, of this huge matrix with SAGE determines the group structure of the quotient $C^2(K,3) / \partial C^2(K,4)$, as one knows from \cite[sec. 2.4.4]{Coh}.
\begin{remark}
In order to keep the dimension of this huge matrix as small as possible, we have to check if several columns represent the same fractional linear cycle in $C^2(K,3)/\partial C^2(K,4)$. All these columns are added to the column with the lowest index, the others are deleted. It is also checked that there are no double rows. If there are any multiple rows, only the one with lowest index is kept, while the others are deleted. Note that we do not check for linear dependence but only for multiple occurrences, since computing the Smith form with every new row in this matrix takes far too long.
%Additionally, if columns represent the same fractional linear cycle in $C^2(K,3)/\partial C^2(K,4)$, only one of them is kept: This can be checked by searching for rows with only two non-zero entries: One entry equal to $1$, the other equal to $-1$. After columns have been deleted, one again has to check for doubled rows. 
\end{remark}

\section{Results}
\subsection{$\MF_2$:}
We find $8$ admissible, non-degenerate algebraic cycles in $C^2(\MF_2,3)$:
\begin{alignat*}{4}
&\left[x,\frac{1}{x+1},\frac{x}{x+1} \right], 
&\left[x,\frac{1}{x+1},\frac{x+1}{x} \right], 
&\left[x,\frac{x}{x+1},\frac{1}{x+1} \right], 
&\left[x,\frac{x}{x+1},x+1 \right], \\
&\left[x,x+1,\frac{x}{x+1} \right], 
&\left[x,x+1,1+\frac{1}{x} \right],
&\left[x,1+\frac{1}{x},\frac{1}{x+1} \right], 
&\left[x,1+\frac{1}{x},x+1 \right].
\end{alignat*}
As one can already check by hand, each of these fractional cycles lies in the kernel of the Grassmannian boundary operator.
\begin{remark}
Invoking the two propositions from section \ref{sec:lemmas}, one easily shows that all $8$ cycles can be transformed into the shape $\left[x,x+1,1+\frac{1}{x} \right]$, which one recognizes as the Totaro curve from \cite{totaro:1992}, also playing an important role in \cite{HGaSM99} and \cite{OP09}.
\end{remark}
Further, we obtain a $163\times 8$ - matrix of relations between the admissible fractional cycles. Let us look at the submatrix consisting of the rows $5,24,25,33,34,37,43$ and $51$:
\[
\begin{pmatrix}
0 & 0 & 0 & 0 & -1 & 0 & 0 & 0\\
0 & 0 & 0 & 1 & 0 & 0 & 0 & 0\\
-1 & 0 & 0 & 0 & 0 & 0 & 0& 0\\
0 & 0 & 0 & 0 & 0 & 0 & 1 & 0\\
0 & 0 & 0 & 0 & 0 & 0 & 0 & 1\\
0 & 0 & 0 & 0 & 0 & -1 & 0 & 0\\
0 & 0 & 1 & 0 & 0 & 0 & 0 & 0\\
0 & 1 & 0 & 0 & 0 & 0 & 0 & 0
\end{pmatrix}
\]
Thus, all $8$ fractional linear cycles in $C^2(\MF_2,3)$ are boundaries of admissible fractional cycles in $C^2(\MF_2,4)$. As our simplified Grassmannian homology contains the projective Grassmannian homology, we have shown:
\begin{proposition}
 The image of ${}^PGH_1^2(\MF_2)$ in $CH^2(\MF_2,3)$ vanishes.
\end{proposition}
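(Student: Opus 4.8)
The plan is to turn the statement into a finite computation over $\MF_2$ and then read off enough explicit boundary relations to kill every generator. Recall that a class in ${}^PGH_1^2(\MF_2)=H_3({}^PCG^2_*)$ is sent, via the short exact sequence \eqref{eq:cplx} followed by the identification ${}^AGH^2_1\cong H_3(DG^2_*)$, to a $\MZ$-combination of fractional linear cycles in $C^2(\MF_2,3)$ lying in $\ker(\partial\colon C^2(\MF_2,3)\to C^2(\MF_2,2))$. Moreover, passing from the genuine quotient $C^2(\MF_2,3)/\partial C^2(\MF_2,4)$ to the enlarged quotient, in which we only allow relations coming from cycles $[x/y,\ell_1,\ell_2,\ell_3]$ in $C^2(\MF_2,4)$ with the $\ell_i$ fractional linear in $x,y$ and without $xy$-term, can only discard relations. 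Hence it suffices to show that every admissible fractional linear Grassmannian cycle in $C^2(\MF_2,3)$ already vanishes in this enlarged quotient.

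First I would enumerate, by nested loops over $\MF_2$, all admissible, non-degenerate cubical cycles $[x,(a_1x+b_1)/(c_1x+d_1),(a_2x+b_2)/(c_2x+d_2)]$, after the M\"obius reparametrization making the first coordinate $x$ and the normalization of the two remaining fractions. Imposing the conditions of Remark~\ref{rem:admissible} --- no coordinate identically $1$, no coordinate $0$ or $\infty$, at most one constant coordinate, and the coincident-divisor condition that any point which is a zero or pole of two coordinates lies in the preimage of $1$ of the third --- leaves exactly the eight cycles displayed above. Applying $\partial_B$ directly shows each of them lies in $\ker(\partial\colon C^2(\MF_2,3)\to C^2(\MF_2,2))$, so the image of ${}^PGH_1^2(\MF_2)$ is contained in the subgroup they generate.

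Next I would produce the relations. Enumerating the admissible, non-degenerate cycles $[x/y,\ell_1,\ell_2,\ell_3]$ in $C^2(\MF_2,4)$ as above, computing their Bloch boundaries in $C^2(\MF_2,3)$, simplifying the boundary terms via the product, transposition and root-of-unity identities recalled in Section~\ref{sec:lemmas}, and recording the resulting coefficient vectors as rows, one obtains the $163\times 8$ relation matrix. The crux is to exhibit, among these rows, eight rows whose restriction to the eight columns indexed by the generators above forms --- after reordering and up to sign --- the $8\times 8$ identity matrix; in the excerpt these are the rows $5,24,25,33,34,37,43,51$. Consequently each of the eight generators is $\pm\partial$ of a cycle in $C^2(\MF_2,4)$, hence vanishes in $C^2(\MF_2,3)/\partial C^2(\MF_2,4)$, and therefore in $CH^2(\MF_2,3)$.

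The difficulty here is not conceptual but verificational: one must be certain that the enumeration in $C^2(\MF_2,3)$ is complete --- no admissible cycle omitted, no inadmissible cycle wrongly admitted --- and that the boundary computations for the eight relevant cycles in $C^2(\MF_2,4)$, including the check that no inadmissible boundary term arises and that the identities of Section~\ref{sec:lemmas} are applied correctly, are free of error. Over $\MF_2$ all of this is small enough to be reproduced by hand, which is what makes the computer-assisted argument fully rigorous.
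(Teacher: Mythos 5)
Your proposal is correct and follows essentially the same route as the paper: enumerate the eight admissible non-degenerate fractional linear cycles in $C^2(\MF_2,3)$, verify they are Grassmannian cycles, and then exhibit the eight rows of the $163\times 8$ relation matrix (rows $5,24,25,33,34,37,43,51$) forming a signed permutation matrix, so that every generator is a boundary from $C^2(\MF_2,4)$. You also correctly reproduce the paper's reduction to the enlarged quotient (dropping the $xy$-coefficients can only remove relations, so vanishing there implies vanishing in the true quotient), so there is nothing to add.
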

\begin{remark}
 As computed in \cite{Brae}, the group ${}^PGH_1^2(\MF_2)$ itself already vanishes.
\end{remark}

\subsection{$\MF_3$:}
Already this case is far more memory consuming. Here, we obtain $64$ admissible, non-degenerate algebraic cycles in $C^2(\MF_3,3)$. Further, we find $106.845$ admissible cycles in $C^2(\MF_3, 4)$ yielding a number of $13.481$ different relations in the quotient $C^2(\MF_3,3) / \partial C^2(\MF_3,4)$. Computing the elementary divisors of the resulting relation matrix gives a sequence
\[
\underbrace{\left[1,\ldots,1\right]}_{\text{$64$ times}},
\]
from which we may conclude that $C^2(\MF_3,3) / \partial C^2(\MF_3,4)$ is trivial. Therefore, we do not need to care about the number of Grassmannian cycles in $C^2(\mathbb{F}_3,3)$ and obtain:
\begin{proposition}
 The image of ${}^PGH_1^2(\MF_3)$ in $CH^2(\MF_3,3)$ vanishes.
\end{proposition}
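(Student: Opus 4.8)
The plan is to reduce the statement to a finite computation in linear algebra over $\MZ$, in the same spirit as the $\MF_2$ case but on a substantially larger scale. Recall that since $Z'(\MF_3,\bullet)$ is acyclic we have $CH^2(\MF_3,3)\cong H_3(C^2(\MF_3,\bullet))$, and that the image of ${}^PGH_1^2(\MF_3)$ is represented by fractional linear cycles in $C^2(\MF_3,3)$. Moreover, replacing $\partial C^2(\MF_3,4)$ by the subgroup generated only by the boundaries of those fractional linear $4$-cycles whose coordinate functions have vanishing $xy$-coefficient can only enlarge the quotient, i.e.\ it removes relations among the $3$-cycles but keeps all of them. Hence it suffices to prove that this enlarged quotient $C^2(\MF_3,3)/\partial C^2(\MF_3,4)$ already vanishes; then every fractional linear $3$-cycle, and in particular every Grassmannian class, has trivial image in $CH^2(\MF_3,3)$.

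Next I would carry out the enumeration. After the M\"obius reparametrisation putting the first coordinate equal to $x$ and the normalisation of the two remaining fractions, one loops over all coefficient tuples in $\MF_3$ and keeps the cycles $\left[x,\tfrac{a_1x+b_1}{c_1x+d_1},\tfrac{a_2x+b_2}{c_2x+d_2}\right]$ that pass the admissibility, non-triviality and non-degeneracy tests of Remark~\ref{rem:admissible} (no coordinate equal to $0$, $1$ or $\infty$; at most one constant coordinate; every zero or pole occurring more than once lies in the preimage of $1$ of the third coordinate); I expect this to yield $64$ generators. Then I would enumerate the admissible, non-degenerate fractional linear $4$-cycles $\left[\tfrac{x}{y},\ast,\ast,\ast\right]$ with the three remaining coordinates of the form $\tfrac{a_ix+b_iy+c_i}{d_ix+e_iy+f_i}$, applying the degeneracy and codimension checks on intersections with the faces of $\Box^4$ of codimension $2$, compute the boundary $\partial_B$ of each, discard those with degenerate or inadmissible boundary terms, M\"obius-normalise each boundary term so that its first coordinate reads $x$, and simplify the resulting $\MZ$-linear combinations using the multiplicativity and the antisymmetry-up-to-correction relations of Section~\ref{sec:lemmas} together with the root-of-unity corollary. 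On the order of $10^5$ such $4$-cycles should survive, producing after collapsing duplicates roughly $1.3\cdot 10^4$ distinct relations.

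Finally I would assemble the relation matrix whose columns are indexed by the $64$ generators and whose rows are the simplified boundaries, and compute its Smith normal form over $\MZ$ with SAGE; if all $64$ elementary divisors equal $1$, the cokernel is trivial, so $C^2(\MF_3,3)=\partial C^2(\MF_3,4)$ and the claim follows. The main obstacle is computational, not conceptual: the $4$-cycle enumeration and, above all, the Smith normal form of the large integer relation matrix are memory-intensive, so one must keep the matrix as small as possible by identifying and merging columns that represent the same class in the quotient and by deleting repeated rows before invoking the normal-form routine. A secondary subtlety is the correctness of the admissibility and degeneracy bookkeeping for the $4$-cycles and their boundary terms: mishandling a single face would either silently drop a genuine relation (harmless here, since fewer relations only enlarge the quotient) or, worse, introduce a spurious one, so these checks should be validated independently, for instance by re-deriving the $\MF_2$ computation and by verifying a sample of boundaries by hand.
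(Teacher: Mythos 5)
Your proposal follows essentially the same route as the paper: enumerate the $64$ admissible non-degenerate fractional linear cycles in $C^2(\MF_3,3)$, enumerate the admissible $4$-cycles (the paper finds $106.845$, yielding $13.481$ distinct relations), and conclude from the Smith normal form having all $64$ elementary divisors equal to $1$ that the enlarged quotient $C^2(\MF_3,3)/\partial C^2(\MF_3,4)$ vanishes, which makes the kernel computation unnecessary. Your observation that restricting to $4$-cycles with vanishing $xy$-coefficient only enlarges the quotient, so its vanishing suffices, is exactly the paper's reduction.
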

\begin{remark}
 This result contradicts the claim of \cite{Brae} that the Grassmannian homology group should be isomorphic to $\MZ^5$. But as we can write down explicit cycles in $C^2(\mathbb{F}_3,4)$ which bound to the Grassmannian cycles in $C^2(\mathbb{F}_3,3)$, we are convinced about the correctness of our result.
\end{remark}

\subsection{$\MF_p, \, p=5,7,11$:}
For $\MF_5$ we obtain $2.120$ admissible, non-degenerate, fractional linear algebraic cycles in $C^2(\MF_5,3)$. Already at this point the computation of all those admissible, non-degenerate algebraic cycles in degree $4$ of this complex would have taken far too long to finish: Given the fact that the computations for $\MF_3$ took several weeks and that the number of possible coordinates for cycles in $C^2(\MF_5,4)$ that have to be checked increases by a factor of around $2.000$, we decided to stop our investigations at this point.

Just for the records, we found $18.260$ admissible, non-degenerate, fractional linear algebraic cycles in $C^2(\MF_7,3)$ and a total of $530.496$ admissible, non-degenerate, fractional linear algebraic cycles in $C^2(\MF_{11},3)$.

%\bibliographystyle{plain}
%\bibliography{grassmann.bib}

\end{document}